\newtheorem{theorem}{Theorem}
\newtheorem{corollary}[theorem]{Corollary}
\newtheorem{lemma}[theorem]{Lemma}
\newtheorem{proposition}[theorem]{Proposition}
\newtheorem{remark}[theorem]{Remark}
\newenvironment{proof}[1][Proof]{\noindent\textbf{#1.} }{\ \rule{0.5em}{0.5em}}
\begin{document}

\begin{center}
{\Large Extinction, periodicity and multistability in a }

{\Large Ricker Model of Stage-Structured Populations }

\medskip

{N. LAZARYAN and H. SEDAGHAT\footnote{Department of Mathematics, Virginia
Commonwealth University Richmond, Virginia, 23284-2014, USA; Email:
hsedagha@vcu.edu, lazaryans@vcu.edu}}
\end{center}

\medskip

\begin{abstract}
We study the dynamics of a second-order difference equation that is derived
from a planar Ricker model of two-stage biological populations. We obtain
sufficient conditions for global convergence to zero in the non-autonomous
case. This gives general conditions for extinction in the biological context.
We also study the dynamics of an autonomous special
case of the equation that generates multistable periodic and non-periodic
orbits in the positive quadrant of the plane.

\end{abstract}

\medskip

\section{Introduction}

Planar systems of type%
\begin{align}
x_{n+1}  &  =\sigma_{1,n}y_{n}+\sigma_{2,n}x_{n}\label{pc1}\\
y_{n+1}  &  =\beta_{n}x_{n}e^{\alpha_{n}-c_{1,n}x_{n}-c_{2,n}y_{n}}
\label{pc2}%
\end{align}
where $\alpha_{n},\beta_{n},\sigma_{i,n},c_{i,n}$ are non-negative numbers for
$i=1,2$ and $n\geq0$ have been used to model single-species, two-stage
populations (e.g. juvenile and adult); see \cite{Cush}--\cite{GL}, \cite{LP}
and \cite{Z}. The exponential function that defines the time and density
dependent fertility rate classifies the above system as a Ricker model. The
coefficients $\sigma_{i,n}$ are typically composed of the natural survival
rates $s_{i}$ and possibly other factors. For example, they may include
harvesting parameters, as in \cite{LP} and \cite{Z}:
\begin{equation}
\sigma_{i}=(1-h_{i})s_{i},\quad\beta=(1-h_{1})b,\quad c_{1}=(1-h_{1}%
)\gamma,\quad c_{2}=0 \label{C0}%
\end{equation}

All parameters in (\ref{C0}) are assumed to be independent of $n$. In this
case, $h_{i},s_{i}\in\lbrack0,1]$, $i=1,2$ denote harvest rates and natural
survival rates, respectively. The study in \cite{LP} shows that the system
(\ref{pc1})-(\ref{pc2}) under (\ref{C0}) generates a wide range of different
behaviors: the occurrence of periodic and chaotic behavior and phenomena such
as bubbles and the counter-intuitive \textquotedblleft hydra effect" (an
increase in harvesting yields an increase in the over-all population) are
established for the autonomous system%
\begin{align*}
x_{n+1}  &  =(1-h_{1})s_{1}y_{n}+(1-h_{2})s_{2}x_{n}\\
y_{n+1}  &  =(1-h_{1})bx_{n}e^{\alpha-(1-h_{1})\gamma x_{n}}.
\end{align*}

Our results in this paper complement the existing literature, e.g.
\cite{AJ}--\cite{GL}, \cite{LP} and \cite{Z}. In the next section we obtain
general results on the uniform boundedness and convergence to zero for the
non-autonomous system (\ref{pc1})-(\ref{pc2}). We also dicuss a refinement of
the convergence to zero results when the parameters of the system are periodic
(simulating extinction in a periodic environment). In particular, these
results show that convergence to zero occurs even if the mean value of
$\sigma_{2,n}$ exceeds 1.

In Section \ref{cr} we study the dynamics of orbits for a mathematically
interesting special case of (\ref{pc1})-(\ref{pc2}) in which $\sigma_{2,n}=0.$
This special case was studied with constant parameters (autonomous case) in
\cite{FHL} where conditions for the occurrence of a globally attracting
positive fixed point as well as a two-cycle (not globally attracting) were
obtained. Conditions implying the occurrence of the two-cycle are of
particular interest to us. In this case, the system reduces to a second-order
equation with a nonhyperbolic positive fixed point. A semiconjugate
factorization of this equation is known (see below) even with variable
parameters and we use it to prove the occurrence of complex dynamics,
including multiple stable (or multistable) periodic and non-periodic solutions
generated from different initial values. Our results also extend the
period-two result in \cite{FHL} to a wider parameter range while allowing some
parameters to be periodic.

\section{Uniform boundedness and global convergence to zero\label{exti}}

For the system (\ref{pc1})-(\ref{pc2}) we generally assume that for all
$n\geq0$:
\begin{align}
\alpha_{n},\beta_{n},\sigma_{i,n},c_{i,n}  &  \geq0,\quad i=1,2\label{rpars}\\
\beta_{n},\sigma_{1,n}  &  >0\ \text{for inifinitely many }n\nonumber
\end{align}

\subsection{General results}

We begin with a simple, yet useful lemma.

\begin{lemma}
\label{max}Let $\alpha>0$, $0<\beta<1$ and $x_{0}\geq0.$ If for all $n\geq0$
\begin{equation}
x_{n+1}\leq\alpha+\beta x_{n} \label{mx}%
\end{equation}
then for every $\varepsilon>0$ and all sufficiently large values of $n$
\[
x_{n}\leq\frac{\alpha}{1-\beta}+\varepsilon.
\]

\end{lemma}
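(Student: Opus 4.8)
The plan is to iterate the recursive inequality (\ref{mx}) and sum the resulting geometric series. First I would prove by induction on $n$ that
\[
x_{n}\leq\beta^{n}x_{0}+\alpha\sum_{k=0}^{n-1}\beta^{k}=\beta^{n}x_{0}+\alpha\,\frac{1-\beta^{n}}{1-\beta}
\]
for all $n\geq1$. The base case $n=1$ is just (\ref{mx}) with $n=0$; for the inductive step, assuming the bound for $n$, I apply (\ref{mx}) once more and use $\beta>0$ to preserve the direction of the inequality, obtaining the bound for $n+1$ after a one-line recomputation of the partial sum.

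Next, I would rewrite the right-hand side in the form
\[
\frac{\alpha}{1-\beta}+\beta^{n}\Bigl(x_{0}-\frac{\alpha}{1-\beta}\Bigr),
\]
and observe that, since $0<\beta<1$, we have $\beta^{n}\to0$, so the second term tends to $0$ as $n\to\infty$. Hence, given $\varepsilon>0$, there is an $N$ (depending on $\beta,x_{0},\alpha,\varepsilon$) such that $\beta^{n}\lvert x_{0}-\alpha/(1-\beta)\rvert<\varepsilon$ for all $n\geq N$, and therefore $x_{n}\leq\alpha/(1-\beta)+\varepsilon$ for all such $n$, which is the claimed conclusion.

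An equivalent, slightly more conceptual route is to note that $f(x)=\alpha+\beta x$ is a contraction of $[0,\infty)$ onto itself with unique fixed point $x^{\ast}=\alpha/(1-\beta)$, that (\ref{mx}) says $x_{n+1}\leq f(x_{n})$, and that monotonicity of $f$ together with an easy comparison argument yields $x_{n}\leq f^{n}(x_{0})\to x^{\ast}$. There is no genuine obstacle in either approach; the only point requiring a little care is tracking the sign of $x_{0}-\alpha/(1-\beta)$ so that the estimate is phrased correctly whether $x_{0}$ starts above or below the fixed point. In both cases $\beta^{n}$ times that quantity vanishes, so the $\varepsilon$-bound holds for all sufficiently large $n$, and the lemma follows.
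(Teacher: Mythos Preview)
Your proposal is correct. The paper's proof is precisely your second, comparison-based route: it sets $u_{0}=x_{0}$, defines $u_{n+1}=\alpha+\beta u_{n}$, shows $x_{n}\leq u_{n}$ by induction, and then invokes $u_{n}\to\alpha/(1-\beta)$; your first approach just makes that convergence explicit via the geometric-series formula.
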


\begin{proof}
Let $u_{0}=x_{0}$ and note that every solution of the linear, first-order
equation $u_{n+1}=\alpha+\beta u_{n}$ converges to its fixed point
$\alpha/(1-\beta).$ Further,%
\begin{align*}
x_{1}  &  \leq\alpha+\beta x_{0}=\alpha+\beta u_{0}=u_{1}\\
x_{2}  &  \leq\alpha+\beta x_{1}\leq\alpha+\beta u_{1}=u_{2}%
\end{align*}
and by induction, $x_{n}\leq u_{n}.$ Since $u_{n}\,\rightarrow\,\alpha
/(1-\beta)$ for every $\varepsilon>0$ and all sufficiently large $n$%
\[
x_{n}\leq u_{n}\leq\frac{\alpha}{1-\beta}+\varepsilon.
\]

\end{proof}

\medskip

The following result from the literature is quoted as a lemma. See \cite{LnAr}
for the proof and some background and references on this result which holds in
a more general setting than discussed here.

\begin{lemma}
\label{gen}Let $\alpha\in(0,1)$ and assume that the functions $f_{n}%
:[0,\infty)^{k+1}\rightarrow\lbrack0,\infty)$ satisfy the inequality%
\begin{equation}
f_{n}(u_{0},\ldots,u_{k})\leq\alpha\max\{u_{0},\ldots,u_{k}\} \label{fin}%
\end{equation}
for all $(u_{0},\ldots,u_{k})\in\lbrack0,\infty)$ and all $n\geq0$. Then for
every solution $\{x_{n}\}$ of the difference equation
\begin{equation}
x_{n+1}=f_{n}(x_{n},x_{n-1},\ldots,x_{n-k}) \label{geq}%
\end{equation}
the following is true%
\begin{equation}
x_{n}\leq\alpha^{n/(k+1)}\max\{x_{0},x_{-1}\ldots,x_{-k}\}. \label{zges}%
\end{equation}

\end{lemma}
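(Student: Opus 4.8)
The plan is to remove the fractional exponent by introducing $\beta=\alpha^{1/(k+1)}$, which lies in $(0,1)$ because $\alpha\in(0,1)$, so that the target inequality \eqref{zges} becomes $x_n\le\beta^{\,n}M$ with $M=\max\{x_0,x_{-1},\ldots,x_{-k}\}$, and then to prove this reformulated bound by strong induction on $n\ge-k$.

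For the base cases $-k\le n\le0$ there is nothing to prove: since $0<\beta<1$ and the exponent is $\le0$ we have $\beta^{\,n}\ge1$, hence $x_n\le M\le\beta^{\,n}M$. For the inductive step, fix $n\ge0$ and assume $x_j\le\beta^{\,j}M$ for every $j$ with $-k\le j\le n$. Using the recursion \eqref{geq} together with the contraction hypothesis \eqref{fin},
\[
x_{n+1}=f_n(x_n,x_{n-1},\ldots,x_{n-k})\le\alpha\max\{x_n,x_{n-1},\ldots,x_{n-k}\}.
\]
By the induction hypothesis $x_{n-i}\le\beta^{\,n-i}M$ for $0\le i\le k$, and since $\beta<1$ the largest of the numbers $\beta^{\,n},\beta^{\,n-1},\ldots,\beta^{\,n-k}$ is the one with the smallest exponent, namely $\beta^{\,n-k}$. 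Hence $x_{n+1}\le\alpha\beta^{\,n-k}M$, and the identity $\alpha\beta^{\,n-k}=\beta^{k+1}\beta^{\,n-k}=\beta^{\,n+1}$ gives $x_{n+1}\le\beta^{\,n+1}M$, which closes the induction. Rewriting $\beta^{\,n}=\alpha^{\,n/(k+1)}$ recovers \eqref{zges}.

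I do not expect a genuine obstacle here; the argument is essentially a comparison/telescoping estimate. The only point requiring care is the index bookkeeping for the sliding window of the $k+1$ most recent terms: one must observe that, because the exponents decrease across the window while $\beta<1$, each single factor $\alpha$ can be ``spread'' over $k+1$ consecutive steps to yield one full power of $\beta=\alpha^{1/(k+1)}$ per step, and one must anchor the induction on all of $n=-k,\ldots,0$ so that the window $\{n-k,\ldots,n\}$ used in the inductive step always consists of indices already handled.
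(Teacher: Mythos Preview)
Your argument is correct: the substitution $\beta=\alpha^{1/(k+1)}$ together with strong induction over the sliding window of length $k+1$ is the standard way to establish this estimate, and your bookkeeping for the base cases $-k\le n\le 0$ and the inductive step is accurate. Note, however, that the paper does not actually supply its own proof of this lemma; it is quoted from the literature with a citation to \cite{LnAr}, so there is no in-paper argument to compare against---your self-contained proof simply fills in what the paper omits.
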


Note that (\ref{fin}) implies that $x_{n}=0$ is a constant solution of
(\ref{geq}) and further, (\ref{zges}) implies that this solution is globally
exponentially stable.

\begin{theorem}
\label{ric}Assume that (\ref{rpars}) holds and further, let $\alpha_{n}$ be
bounded and $\limsup_{n\rightarrow\infty}\sigma_{2,n}<1$.

(a) If $\sigma_{1,n}$ is bounded and there is $M>0$ such that $\beta_{n}\leq
Mc_{1,n}$ for all $n\geq0$ then every orbit of (\ref{pc1})-(\ref{pc2}) in
$[0,\infty)^{2}$ is uniformly bounded.

(b) If $\beta_{n}$ is bounded and the following inequality holds then all
orbits of (\ref{pc1})-(\ref{pc2}) in $[0,\infty)^{2}$ converge to (0,0):%
\begin{equation}
\limsup_{n\rightarrow\infty}\left(  \sigma_{1,n}\beta_{n}e^{\alpha_{n}}%
+\sigma_{2,n}\right)  <1. \label{c0}%
\end{equation}

\end{theorem}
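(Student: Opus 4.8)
The plan is to derive, from the two-dimensional system, scalar recursive inequalities of the form handled by Lemmas \ref{max} and \ref{gen}, treating the $x$-component and the product $\sigma_{1,n}x_n$ (which feeds back through $y_n$) as the quantities to be controlled. The key observation is that the exponential factor $e^{\alpha_n - c_{1,n}x_n - c_{2,n}y_n}$ is bounded above by $e^{\alpha_n}$, and since $\alpha_n$ is bounded there is a constant $A$ with $e^{\alpha_n}\le A$ for all $n$; also there is $\sigma<1$ with $\sigma_{2,n}\le\sigma$ for all large $n$ (from $\limsup\sigma_{2,n}<1$).

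For part (a): First I would bound the fertility term. From (\ref{pc2}), $y_{n+1}=\beta_n x_n e^{\alpha_n-c_{1,n}x_n-c_{2,n}y_n}\le \beta_n x_n e^{\alpha_n}$. Using $\beta_n\le Mc_{1,n}$, write $\beta_n x_n e^{\alpha_n}= (M e^{\alpha_n})\,(c_{1,n}x_n)e^{-c_{1,n}x_n}\cdot e^{c_{1,n}x_n}\cdot$\,--\,better: bound $\beta_n x_n e^{-c_{1,n}x_n}\le M (c_{1,n}x_n)e^{-c_{1,n}x_n}\le M/e$, since $te^{-t}\le 1/e$ for $t\ge0$. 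Hence $y_{n+1}\le (M/e)e^{\alpha_n}\le MA/e =: B$, so $y_n$ is bounded for $n\ge1$. Now feed this into (\ref{pc1}): $x_{n+1}=\sigma_{1,n}y_n+\sigma_{2,n}x_n\le \Sigma_1 B + \sigma x_n$ for large $n$, where $\Sigma_1=\sup\sigma_{1,n}<\infty$. Since $\sigma<1$, Lemma \ref{max} gives $\limsup x_n\le \Sigma_1 B/(1-\sigma)$, so $x_n$ is uniformly bounded, and then $y_n$ already is; this proves (a).

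For part (b): Here I want global convergence to zero, so I aim for an inequality of the type (\ref{fin}) for a suitable scalar sequence. Set $\mu_n=x_n$ and also track $\nu_n=\sigma_{1,n}\beta_n e^{\alpha_n}$ (bounded by hypothesis since $\beta_n$ is bounded, $\alpha_n$ bounded). From (\ref{pc2}), $y_{n+1}\le \beta_n e^{\alpha_n} x_n$, hence $\sigma_{1,n+1}y_{n+1}\le \sigma_{1,n+1}\beta_n e^{\alpha_n}x_n$. Substituting into (\ref{pc1}) at step $n+1$: $x_{n+2}=\sigma_{1,n+1}y_{n+1}+\sigma_{2,n+1}x_{n+1}\le \sigma_{1,n+1}\beta_n e^{\alpha_n}x_n+\sigma_{2,n+1}x_{n+1}$. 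By (\ref{c0}) there is $\alpha^\ast<1$ and $N$ so that for all $n\ge N$ both $\sigma_{1,n+1}\beta_n e^{\alpha_n}<\alpha^\ast$ and $\sigma_{2,n+1}<\alpha^\ast$ and in fact $\sigma_{1,n+1}\beta_n e^{\alpha_n}+\sigma_{2,n+1}<\alpha^\ast$; this is the point where the precise form of (\ref{c0}) — the combined limsup rather than separate ones — is essential. Then $x_{n+2}\le \alpha^\ast\max\{x_n,x_{n+1}\}$ for $n\ge N$, which is exactly the hypothesis (\ref{fin}) of Lemma \ref{gen} (with $k=1$, applied to the shifted sequence). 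Lemma \ref{gen} then yields $x_n\to0$ geometrically; feeding $x_n\to0$ back into (\ref{pc2}) gives $y_n\to0$ via $y_{n+1}\le \beta_n e^{\alpha_n}x_n\to0$ (using boundedness of $\beta_n$ and $\alpha_n$). Hence $(x_n,y_n)\to(0,0)$.

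The main obstacle is the bookkeeping in part (b): one must pass from the system to a genuine second-order scalar inequality without losing the coupling structure, and in particular verify that (\ref{c0}) really does force $x_{n+2}\le\alpha^\ast\max\{x_n,x_{n+1}\}$ uniformly for large $n$ — that is, that the limsup of the sum, not merely of each summand, is what controls the contraction. One should also be slightly careful that the indices of $\sigma_{1,n+1}$, $\beta_n$, $\alpha_n$, $\sigma_{2,n+1}$ line up so that the quantity whose limsup appears in (\ref{c0}) is exactly the coefficient that multiplies $\max\{x_n,x_{n+1}\}$; a small index shift in (\ref{c0}) is harmless since only the limsup matters. Part (a) is comparatively routine once the bound $te^{-t}\le1/e$ is invoked.
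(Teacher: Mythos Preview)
Your proposal is correct and follows essentially the same route as the paper: for (a) you use the bound $te^{-t}\le 1/e$ together with $\beta_n\le Mc_{1,n}$ to get a uniform bound on $y_n$, then apply Lemma~\ref{max} to the resulting linear inequality for $x_n$; for (b) you fold the system into the second-order scalar inequality $x_{n+2}\le (\sigma_{1,n+1}\beta_n e^{\alpha_n})x_n+\sigma_{2,n+1}x_{n+1}$, use (\ref{c0}) to bound the coefficient sum by some $\alpha^\ast<1$, and invoke Lemma~\ref{gen}. Your remark that the index shift between the coefficient $\sigma_{1,n+1}\beta_n e^{\alpha_n}+\sigma_{2,n+1}$ and the expression in (\ref{c0}) is harmless (since only the $\limsup$ matters) is exactly right, and in fact the paper's own proof has the analogous index slip.
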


\begin{proof}
(a) For $u,v\geq0$ and all $n\geq0$ define%
\[
\phi_{n}(u,v)=\beta_{n}e^{\alpha_{n}-c_{1,n}u-c_{2,n}v}%
\]

If $c_{1,n}\not =0$ for all $n$ then elementary calculus yields%
\begin{equation}
u\phi_{n}(u,v)\leq u\phi_{n}\left(  \frac{1}{c_{1,n}},0\right)  =\frac
{\beta_{n}}{c_{1,n}}e^{\alpha_{n}-1} \label{phin}%
\end{equation}

If $c_{1,n}=0$ for some $n$ then $\beta_{n}\leq Mc_{1,n}=0$ and $\phi
_{n}(u,v)=0$ for such $n$.

Next, by the hypotheses there are numbers $M_{1},M_{2}>0$ and $\bar{\sigma}%
\in(0,1)$ such that for all sufficiently large values of $n$%
\[
\sigma_{1,n}\leq M_{1},\quad\alpha_{n}\leq M_{2},\quad\sigma_{2,n}\leq
\bar{\sigma}%
\]

Since $\beta_{n}\leq Mc_{1,n}$, it follows that for $u,v\geq0$ and all $n$
\[
u\phi_{n}(u,v)\leq Me^{M_{2}-1}\doteq M_{0}%
\]

It follows that $y_{n}\leq M_{0}$ for $n\geq1$ so by (\ref{pc1})%
\[
x_{n+1}\leq M_{0}M_{1}+\sigma_{2,n}(u,v)x_{n}\leq M_{0}M_{1}+\bar{\sigma}x_{n}%
\]

Next, applying Lemma \ref{max} with $\varepsilon=\bar{\sigma}/(1-\bar{\sigma
})$ we obtain for all (large) $n$
\[
0\leq x_{n}\leq\frac{M_{0}M_{1}+\bar{\sigma}}{1-\bar{\sigma}}%
\]
as claimed.

(b) If $\phi_{n}$ is as defined in (a) above then (\ref{pc2}) implies that%
\[
y_{n}\leq\beta_{n}e^{\alpha_{n}}x_{n-1}%
\]

By (\ref{c0}) there is $\delta\in(0,1)$ such that $\sigma_{1,n}\beta
_{n}e^{\alpha_{n}}+\sigma_{2,n}\leq\delta$ for all (large) $n$ so from
(\ref{pc1}) it follows that
\begin{align*}
x_{n+1}  &  \leq\beta_{n}e^{\alpha_{n}}\sigma_{1,n}x_{n-1}+\sigma_{2,n}x_{n}\\
&  \leq\left(  \sigma_{1,n}\beta_{n}e^{\alpha_{n}}+\sigma_{2,n}\right)
\max\{x_{n},x_{n-1}\}\\
&  \leq\delta\max\{x_{n},x_{n-1}\}
\end{align*}

Lemma \ref{gen} now implies that $\lim_{n\rightarrow\infty}x_{n}=0.$ Further,
since both $\alpha_{n}$ and $\beta_{n}$ are bounded, there is $\mu>0$ such
that $\beta_{n}e^{\alpha_{n}}\leq\mu$ for all $n$. Thus,
\[
\lim_{n\rightarrow\infty}y_{n}\leq\mu\lim_{n\rightarrow\infty}x_{n-1}=0
\]
and the proof is complete.
\end{proof}

\medskip

\begin{remark}
1. The hypotheses of the above theorem allow the parameters to contain
arbitrary low-level fluctuations, a feature of possible interest in some
modeling applications.

2. In Part (a) of the above corollary it is more essential to have
$c_{1,n}\not =0$ than $\beta_{n}$ be bounded. Indeed, unbounded solutions
occur in the following autonomous linear system%
\begin{align*}
x_{n+1}  &  =\sigma_{1}y_{n}+\sigma_{2}x_{n}\\
y_{n+1}  &  =\beta e^{\alpha}x_{n}%
\end{align*}
in which $c_{1,n}=0$ for all $n$ and $\beta_{n}=\beta$ is bounded. Note that%
\[
x_{n+2}=\sigma_{1}y_{n+1}+\sigma_{2}x_{n+1}=\beta e^{\alpha}\sigma_{1}%
x_{n}+\sigma_{2}x_{n+1}%
\]

It is evident that unbounded solutions exist unless $\sigma_{1}\beta
e^{\alpha}\leq1-\sigma_{2}$. This is a severe restriction resembling that in
Part (b) of the above corollary.
\end{remark}

\subsection{Global convergence to zero with periodic parameters}

Theorem \ref{ric} gives general sufficient conditions for the convergence of
all non-negative orbits of the planar system to (0,0). In this section we
assume that all parameters are periodic and study convergence to zero in this
more restricted setting. In particular, the results in this section indicate
that global convergence to zero may occur even if (\ref{c0}) does not hold;
see Section \ref{bio} below. Recall from the proof of Theorem \ref{ric} that%
\begin{equation}
x_{n+1}\leq\beta_{n}e^{\alpha_{n}}\sigma_{1,n}x_{n-1}+\sigma_{2,n}%
x_{n}.\label{n2l}%
\end{equation}

The right hand side of the above inequality is a linear expression. Consider
the linear difference equation%
\begin{equation}
u_{n+1}=a_{n}u_{n}+b_{n}u_{n-1},\qquad a_{n+p_{1}}=a_{n},\ b_{n+p_{2}}=b_{n}
\label{linp}%
\end{equation}
where the sequences $a_{n},b_{n}$ have periods $p_{1},p_{2}$ that are positive
integers. If $p=\operatorname{lcm}(p_{1},p_{2})$ is the least common multiple
of the two periods, we say that the linear difference equation (\ref{linp}) is
periodic with period $p.$ We assume that%
\begin{equation}
a_{n},b_{n}\geq0,\quad n=0,1,2,\ldots\label{pos}%
\end{equation}

In the biological setting, these parameters are defined as follows:
\begin{equation}
a_{n}=\sigma_{2,n},\quad b_{n}=\beta_{n}e^{\alpha_{n}}\sigma_{1,n}%
\label{bioab}%
\end{equation}

Of interest is the fact that the biological parameters $\alpha_{n},\beta
_{n},\sigma_{1,n}$ need not be periodic in order for $a_{n},b_{n}$ to be
periodic. As long as the combination of parameters $\beta_{n}e^{\alpha_{n}%
}\sigma_{1,n}$ is periodic along with $\sigma_{2,n}$ we obtain periodicity.
This allows greater flexibility in defining some of the system parameters.

By Lemma \ref{gen} every solution of (\ref{linp}) converges to zero if
$a_{n}+b_{n}<1$ for all $n.$ However, it is known that convergence to zero may
occur even when $a_{n}+b_{n}$ exceeds 1 (for infinitely many $n$ in the
periodic case). We use the approach in \cite{Lin} to examine the consequences
of this issue when the planar system has periodic parameters. The following
result is an immediate consequence of Theorem 13 in \cite{Lin}.

\begin{lemma}
\label{scl}Assume that (\ref{linp}) has period $p\geq1$ and $\delta_{j}%
,\theta_{j}$ for $j=1,2,\ldots,p$ are obtained by iteration from the real
initial values%
\begin{equation}
\delta_{0}=0,\ \delta_{1}=1;\quad\theta_{0}=1,\ \theta_{1}=0 \label{01}%
\end{equation}

Suppose that the quadratic polynomial%
\begin{equation}
\delta_{p}r^{2}+(\theta_{p}-\delta_{p+1})r-\theta_{p+1}=0 \label{qchr}%
\end{equation}
is proper, i.e. not $0=0$ and has a real root $r_{1}\not =0.$ If the
recurrence%
\begin{equation}
r_{n+1}=a_{n}+\frac{b_{n}}{r_{n}} \label{epr}%
\end{equation}
\noindent generates nonzero real numbers $r_{2},\ldots,r_{p}$ then
$\{r_{n}\}_{n=1}^{\infty}$ is periodic with preiod $p$ and yields a
semiconjuagte factorization of (\ref{linp}) into a pair of first order
equations as follows:
\begin{align}
t_{n+1}  &  =-\frac{b_{n}}{r_{n}}t_{n},\quad t_{1}=u_{1}-r_{1}u_{0}%
\label{teq}\\
u_{n+1}  &  =r_{n+1}u_{n}+t_{n+1}. \label{xeq}%
\end{align}

\end{lemma}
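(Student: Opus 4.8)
The plan is to recognize Lemma \ref{scl} as a specialization of a general "characteristic-equation" factorization for linear recurrences to the periodic case, and to verify the three assertions in order: (i) that the recursion (\ref{epr}) produces a genuinely $p$-periodic sequence $\{r_n\}$; (ii) that (\ref{teq})--(\ref{xeq}) really do reproduce every solution of (\ref{linp}); and (iii) that the characteristic polynomial (\ref{qchr}) is the right object governing the existence of such an $r_1$. Since the excerpt explicitly says the statement is an immediate consequence of Theorem 13 in \cite{Lin}, the proof should mostly consist of unwinding what that theorem gives once one plugs in the initial data (\ref{01}).

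First I would recall the elementary fact that for the order-two linear recurrence (\ref{linp}), the two sequences $\{\theta_n\}$ and $\{\delta_n\}$ defined by (\ref{01}) are the fundamental solutions, so any solution $\{u_n\}$ satisfies $u_n = u_0\theta_n + u_1\delta_n$. In particular $u_p = u_0\theta_p + u_1\delta_p$ and $u_{p+1} = u_0\theta_{p+1}+u_1\delta_{p+1}$: the matrix of the period map over one full period $p$ has entries built from $\theta_p,\delta_p,\theta_{p+1},\delta_{p+1}$, and its characteristic polynomial is exactly $\delta_p r^2 + (\theta_p-\delta_{p+1})r - \theta_{p+1}$ (here one uses that the determinant of the one-step companion matrix equals $-b_n$, whose product over a period is $-\prod b_n$, matching the constant term after the normalization coming from $\delta_0=0,\theta_0=1$). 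The hypothesis that this polynomial is proper and has a nonzero real root $r_1$ then says precisely that the period map has a real nonzero eigenvalue, i.e. there is a one-dimensional invariant direction; I would next argue that running (\ref{epr}) forward from such an $r_1$ — provided no division by zero occurs, which is exactly the stated hypothesis that $r_2,\dots,r_p$ are nonzero reals — returns to $r_1$ after $p$ steps, because $r_{p+1}$ is the image of $r_1$ under a Möbius transformation whose fixed points are the eigenvalues, and an eigenvalue is a fixed point. Hence $\{r_n\}$ is $p$-periodic.

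Once periodicity of $\{r_n\}$ is in hand, the semiconjugate factorization is a direct computation: define $t_n = u_n - r_n u_{n-1}$ for a given solution $\{u_n\}$ of (\ref{linp}). Substituting $u_{n+1} = a_n u_n + b_n u_{n-1}$ and using the defining relation $r_{n+1} r_n = a_n r_n + b_n$ (a rearrangement of (\ref{epr})) gives $t_{n+1} = u_{n+1} - r_{n+1}u_n = a_n u_n + b_n u_{n-1} - r_{n+1} u_n = -\,r_{n+1}(u_n - \tfrac{b_n}{r_{n+1}r_n}\cdot\tfrac{r_{n+1}r_n}{\,\cdot\,}\,)$ — more cleanly, one checks $t_{n+1} = -\frac{b_n}{r_n}\,(u_n - r_n u_{n-1}) = -\frac{b_n}{r_n} t_n$, which is (\ref{teq}), while $u_{n+1} = r_{n+1}u_n + t_{n+1}$ is just the definition of $t_{n+1}$ rearranged, giving (\ref{xeq}). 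The initial condition $t_1 = u_1 - r_1 u_0$ is the $n=1$ case of the definition. This shows (\ref{linp}) factors through the triangular pair, and since the passage $\{u_n\}\mapsto\{(t_n,u_n)\}$ is invertible, it is a genuine semiconjugacy.

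The main obstacle — and the only place real care is needed — is the bookkeeping in step (i): matching the exact coefficients of the quadratic (\ref{qchr}) to the entries of the period-map matrix with the normalization induced by (\ref{01}), and making sure the "proper / nonzero real root" hypothesis is exactly what is needed for the Möbius map in (\ref{epr}) to have the fixed point $r_1$ without the degenerate case $\delta_p = 0$ collapsing the quadratic. The nonzero-reality of $r_2,\dots,r_p$ is assumed, so the only genuine work is confirming that the return value $r_{p+1}$ equals $r_1$; this follows because iterating (\ref{epr}) $p$ times composes $p$ Möbius maps into the single period-map Möbius transformation $r\mapsto (\delta_{p+1} r + \theta_{p+1})/(\delta_p r + \theta_p)$ — wait, with signs dictated by the determinant $-\prod b_n$ — whose fixed-point equation is exactly (\ref{qchr}), so any real root $r_1$ of (\ref{qchr}) is fixed and the loop closes. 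Everything else is the routine verification quoted from \cite{Lin}, so I would cite Theorem 13 there for the general statement and present the above as the specialization to periodic coefficients.
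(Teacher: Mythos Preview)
Your proposal is correct, and in fact you supply considerably more than the paper does: the paper does not prove Lemma~\ref{scl} at all but simply quotes it as an immediate consequence of Theorem~13 in~\cite{Lin}. Your sketch is essentially the standard argument behind that theorem, specialized to periodic coefficients. The key step you identify --- that composing the Riccati maps $r\mapsto a_n+b_n/r$ over one full period yields the M\"obius transformation $r\mapsto(\delta_{p+1}r+\theta_{p+1})/(\delta_p r+\theta_p)$, whose fixed-point equation is precisely~(\ref{qchr}) --- is exactly right, and indeed this very computation reappears almost verbatim later in the paper's own proof of Lemma~\ref{lin}(b); compare formulas~(\ref{r1}) and~(\ref{rj}) there. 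Your verification of the factorization via $t_n=u_n-r_nu_{n-1}$ is also the standard one. The only cosmetic blemish is the garbled intermediate expression before ``more cleanly''; you recover immediately with the correct one-line check $t_{n+1}=(a_n-r_{n+1})u_n+b_nu_{n-1}=-\tfrac{b_n}{r_n}(u_n-r_nu_{n-1})$, so nothing is actually wrong.
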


For an introduction to the concept of semiconjuagte factorization see
\cite{FSOR} which also contains the application of this method to linear
equations over algebraic fields. A more general application of semiconjugate
factorization to linear equations in rings appeares in \cite{Lin}.

The sequence $\{r_{n}\}$ that is generated by (\ref{epr}) is said to be
an\textit{ eigensequence} of (\ref{linp}). Eigenvalues are constant
eigensequences, since if $p=1$ in Lemma \ref{scl} then (\ref{qchr}) reduces to%
\[
r^{2}-\delta_{2}r-\theta_{2}=0\quad\text{or\quad}r^{2}-a_{1}r-b_{1}=0
\]

The last equation is recognizable as the charateristic polynomial of
(\ref{linp}).

Each of the equations (\ref{teq}) and (\ref{xeq}) readily yields a solution by
iteration as follows%
\begin{align}
t_{n}  &  =t_{1}(-1)^{n-1}\left(  \frac{b_{1}b_{2}\cdots b_{n-1}}{r_{1}%
r_{2}\cdots r_{n-1}}\right)  ,\label{tn}\\
u_{n}  &  =r_{n}r_{n-1}\cdots r_{2}u_{1}+r_{n}r_{n-1}\cdots r_{3}t_{2}+\cdots
r_{n}t_{n-1}+t_{n}\nonumber\\
&  =r_{n}r_{n-1}\cdots r_{2}r_{1}u_{0}+\sum_{i=1}^{n-1}r_{n}r_{n-1}\cdots
r_{i+1}t_{i}+t_{n} \label{un}%
\end{align}

\begin{lemma}
\label{b0}Suppose that the numbers $\delta_{n}$ and $\theta_{n}$ are defined
as in Lemma \ref{scl}, although here we do not assume that (\ref{linp}) is
periodic. Then

(a) $\theta_{n}=0$ for all $n\geq2$ if and only if $b_{1}=0.$

(b) If (\ref{pos})\ holds then for all $n\geq2$%
\begin{align}
\delta_{n}  &  \geq a_{1}a_{2}\cdots a_{n-1},\quad\theta_{n}\geq b_{1}%
a_{2}\cdots a_{n-1}\label{anb}\\
\delta_{2n-1}  &  \geq b_{2}b_{4}\cdots b_{2n-2},\quad\theta_{2n}\geq
b_{1}b_{3}\cdots b_{2n-1} \label{anc}%
\end{align}

\end{lemma}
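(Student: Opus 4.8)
The plan is to exploit the fact that, by construction, both $\{\delta_n\}$ and $\{\theta_n\}$ satisfy the same linear recurrence (\ref{linp}) as $\{u_n\}$, namely $\delta_{n+1}=a_n\delta_n+b_n\delta_{n-1}$ and $\theta_{n+1}=a_n\theta_n+b_n\theta_{n-1}$ for all $n\geq1$, differing only in the initial data (\ref{01}). Computing the first step from those initial values gives $\delta_2=a_1$ and $\theta_2=b_1$, which anchors every induction below.

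For part (a), the ``if'' direction follows because $b_1=0$ forces $\theta_1=\theta_2=0$, after which the recurrence propagates the zero: if $\theta_{n-1}=\theta_n=0$ then $\theta_{n+1}=a_n\theta_n+b_n\theta_{n-1}=0$, so $\theta_n=0$ for all $n\geq2$ by induction. The ``only if'' direction is immediate, since $\theta_n=0$ for all $n\geq2$ gives in particular $0=\theta_2=b_1$.

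For part (b), assume (\ref{pos}). I would first record, by a one-line induction on the recurrence, that $\delta_n\geq0$ and $\theta_n\geq0$ for all $n\geq0$, since the initial values are nonnegative and the coefficients $a_n,b_n$ are nonnegative. This makes each term on the right side of (\ref{linp}) nonnegative, so we get the one-step bounds $\delta_{n+1}\geq a_n\delta_n$ and $\delta_{n+1}\geq b_n\delta_{n-1}$, and likewise $\theta_{n+1}\geq a_n\theta_n$ and $\theta_{n+1}\geq b_n\theta_{n-1}$, for every $n\geq1$. Iterating the $a_n$-bound down to the base values $\delta_1=1$ and $\theta_2=b_1$ (the case $\theta_1=0$ being vacuous, since the claimed bound at $n=2$ is just $\theta_2\geq b_1$) yields, as a formal induction on $n$, the estimates $\delta_n\geq a_1a_2\cdots a_{n-1}$ and $\theta_n\geq b_1a_2\cdots a_{n-1}$ of (\ref{anb}). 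Iterating instead the $b_n$-bound, which lowers the index by two at each step, down to $\delta_1=1$ gives $\delta_{2n-1}\geq b_2b_4\cdots b_{2n-2}$, and down to $\theta_2=b_1$ gives $\theta_{2n}\geq b_1b_3\cdots b_{2n-1}$; these are the inequalities (\ref{anc}).

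The only real care needed — and the closest thing to an obstacle — is index bookkeeping: making sure the empty-product conventions match the base cases (for instance, the right side of the $\theta_n$ bound at $n=2$ is $b_1$ times an empty product), that the recurrence index starts at $n=1$ so the inductions launch from $\delta_1,\theta_1$ rather than from $\delta_0,\theta_0$, and that the parity of the indices is tracked correctly in the telescoping that produces (\ref{anc}). None of this is deep, and each estimate is cleanest stated as its own short induction.
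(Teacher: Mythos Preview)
Your proposal is correct and follows essentially the same approach as the paper: both arguments compute $\delta_2=a_1$, $\theta_2=b_1$ from the initial data, handle (a) by noting $\theta_2=b_1$ and propagating zeros through the recurrence, and prove (b) by dropping one nonnegative term from $\delta_{n+1}=a_n\delta_n+b_n\delta_{n-1}$ (and the analogous $\theta$-recurrence) to get either $\delta_{n+1}\geq a_n\delta_n$ or $\delta_{n+1}\geq b_n\delta_{n-1}$, then inducting. The only cosmetic difference is that you state the nonnegativity of $\delta_n,\theta_n$ as a separate preliminary step, whereas the paper uses it implicitly.
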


\begin{proof}
(a) Let $b_{1}=0.$ Then $\theta_{2}=b_{1}=0$ and since $\theta_{1}=0$ by
definition it follows that $\theta_{3}=0.$ Induction completes the proof that
$\theta_{n}=0$ if $n\geq2.$ The converse is obvious since $b_{1}=\theta_{2}.$

(b) Since $\delta_{2}=a_{1}$ and $\theta_{2}=b_{1}$ the stated inequalities
hold for $n=2.$ If (\ref{anb}) is true for some $k\geq2$ then%
\begin{align*}
\delta_{k+1}  &  =a_{k}\delta_{k}+b_{k}\delta_{k-1}\geq a_{k}\delta_{k}\geq
a_{1}a_{2}\cdots a_{k-1}a_{k}\\
\theta_{k+1}  &  =a_{k}\theta_{k}+b_{k}\theta_{k-1}\geq a_{k}\theta_{k}\geq
b_{1}a_{2}\cdots a_{k-1}a_{k}%
\end{align*}

Now, the proof is completed by induction. The proof of (\ref{anc}) is similar
since%
\[
\delta_{3}=a_{2}\delta_{2}+b_{2}\delta_{1}\geq b_{2}\quad\text{and\quad}%
\theta_{4}=a_{3}\theta_{3}+b_{3}\theta_{2}\geq b_{3}b_{1}%
\]
and if (\ref{anc}) holds for some $k\geq2$ then%
\begin{align*}
\delta_{2k+1}  &  \geq b_{2k}\delta_{2k-1}\geq b_{2}b_{4}\cdots b_{2k-2}%
b_{2k}\\
\theta_{2k+2}  &  \geq b_{2k+1}\theta_{2k}\geq b_{1}b_{3}\cdots b_{2k-1}%
b_{2k+1}%
\end{align*}
which establishes the induction step.
\end{proof}

\medskip

\begin{lemma}
\label{lin}Assume that (\ref{pos})\ holds with $a_{i}>0$ for $i=1,\ldots,p$
and (\ref{linp}) is periodic with period $p\geq2.$ Then

(a) Equation (\ref{linp}) has a positive eigensequence $\{r_{n}\}$ of period
$p.$

(b) If $b_{i}>0$ for $i=1,\ldots,p$ then%
\begin{equation}
r_{1}r_{2}\cdots r_{p}=\frac{1}{2}\left(  \delta_{p+1}+\theta_{p}%
+\sqrt{(\delta_{p+1}-\theta_{p})^{2}+4\delta_{p}\theta_{p+1}}\right)
\label{ala}%
\end{equation}

Hence, $r_{1}r_{2}\cdots r_{p}<1$ if
\begin{equation}
\delta_{p}\theta_{p+1}<(1-\delta_{p+1})(1-\theta_{p}) \label{alb}%
\end{equation}

(c) If $b_{i}<1$ for $i=1,\ldots,p$ then $r_{1}r_{2}\cdots r_{p}>b_{1}%
b_{2}\cdots b_{p}.$
\end{lemma}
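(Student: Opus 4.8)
\noindent The plan is to identify the one-period product $r_1r_2\cdots r_p$ of the eigensequence with the dominant eigenvalue of the $2\times2$ monodromy matrix of (\ref{linp}) and then to read off (a)--(c) from elementary facts about that matrix together with the positivity of the $r_n$. For (a), the standing hypothesis $a_i>0$ and Lemma \ref{b0}(b) give $\delta_p\ge a_1\cdots a_{p-1}>0$ and $\delta_{p+1}\ge a_1\cdots a_p>0$, so the quadratic (\ref{qchr}) has positive leading coefficient and is proper, and the product of its two roots equals $-\theta_{p+1}/\delta_p\le0$. If $\theta_{p+1}>0$ the roots have opposite signs and I take $r_1>0$ to be the positive one; if $\theta_{p+1}=0$ then $\theta_{p+1}\ge b_1a_2\cdots a_p$ (Lemma \ref{b0}(b)) forces $b_1=0$, hence $\theta_p=0$ by Lemma \ref{b0}(a), so (\ref{qchr}) becomes $\delta_pr^2-\delta_{p+1}r=0$ and I take $r_1=\delta_{p+1}/\delta_p>0$. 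In either case $r_1>0$ is a nonzero real root of (\ref{qchr}), and iterating (\ref{epr}) keeps $r_{n+1}=a_n+b_n/r_n\ge a_n>0$, so $r_2,\dots,r_p$ are positive; Lemma \ref{scl} then produces a period-$p$ eigensequence, necessarily positive.

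For (b), regard (\ref{linp}) as the companion system advancing the state $(u_{n+1},u_n)$ by the matrix with rows $(a_n,b_n)$ and $(1,0)$; the monodromy matrix $M$ over one period, applied to the standard basis vectors $(1,0)$ and $(0,1)$ — which are exactly the initial data of the $\delta$- and $\theta$-solutions — has rows $(\delta_{p+1},\theta_{p+1})$ and $(\delta_p,\theta_p)$, so its characteristic polynomial $\lambda^2-(\delta_{p+1}+\theta_p)\lambda+(\delta_{p+1}\theta_p-\delta_p\theta_{p+1})$ has larger root equal to the right-hand side of (\ref{ala}); call it $\lambda_+$ and the other root $\lambda_-$. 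Now take the solution of (\ref{linp}) with $u_0=1$, $u_1=r_1$: then $t_1=u_1-r_1u_0=0$ in (\ref{teq}), so $t_n\equiv0$ and (\ref{xeq}) gives $u_n=r_nr_{n-1}\cdots r_1$; using $r_{p+1}=r_1$ this yields $(u_{p+1},u_p)=(r_1\cdots r_p)(r_1,1)$, i.e.\ $(r_1,1)$ is an eigenvector of $M$ for the eigenvalue $r_1r_2\cdots r_p$, so this product is $\lambda_+$ or $\lambda_-$. To exclude $\lambda_-$: since $b_1>0$, Lemma \ref{b0}(b) gives $\theta_{p+1}\ge b_1a_2\cdots a_p>0$, so the discriminant exceeds $(\delta_{p+1}-\theta_p)^2$ and $\lambda_--\theta_p=\tfrac{1}{2}\left(\delta_{p+1}-\theta_p-\sqrt{(\delta_{p+1}-\theta_p)^2+4\delta_p\theta_{p+1}}\right)<0$; hence the eigenvector of $\lambda_-$ normalized to second coordinate $1$ has negative first coordinate $(\lambda_--\theta_p)/\delta_p$, contradicting $r_1>0$. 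Therefore $r_1r_2\cdots r_p=\lambda_+$, which is (\ref{ala}). For the ``hence'', $\lambda_+<1$ is equivalent to $\delta_{p+1}+\theta_p<2$ together with (after isolating and squaring the radical and using $(2-\delta_{p+1}-\theta_p)^2-(\delta_{p+1}-\theta_p)^2=4(1-\delta_{p+1})(1-\theta_p)$) the inequality $\delta_p\theta_{p+1}<(1-\delta_{p+1})(1-\theta_p)$; the side condition $\delta_{p+1}+\theta_p<2$ is automatic in the regime $\delta_{p+1}<1$, $\theta_p<1$ in which (\ref{alb}) is the natural hypothesis, so (\ref{alb}) yields $r_1\cdots r_p<1$.

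For (c), part (a) gives $r_n>0$ for all $n$, so from (\ref{epr}) we get $r_nr_{n+1}=a_nr_n+b_n>b_n\ge0$ for each $n$ (strict because $a_n>0$). Multiplying over $n=1,\dots,p$ and using $r_{p+1}=r_1$ gives $(r_1r_2\cdots r_p)^2=\prod_{n=1}^p r_nr_{n+1}>\prod_{n=1}^p b_n=b_1b_2\cdots b_p$, hence $r_1\cdots r_p>\sqrt{b_1\cdots b_p}$. Since each $b_i<1$ and $p\ge2$ we have $0\le b_1\cdots b_p<1$; if the product is positive then $\sqrt{b_1\cdots b_p}>b_1\cdots b_p$, and if it is $0$ then $r_1\cdots r_p>0$ already, so in both cases $r_1r_2\cdots r_p>b_1b_2\cdots b_p$.

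The main obstacle I anticipate is in part (b): pinning $r_1\cdots r_p$ to the \emph{larger} eigenvalue $\lambda_+$ rather than $\lambda_-$ — precisely where $r_1>0$ and $\theta_{p+1}>0$ (hence $b_1>0$) are indispensable — and the sign bookkeeping behind the ``hence'' statement, in particular keeping track of the tacit side condition $\delta_{p+1}+\theta_p<2$. By contrast, once positivity of the eigensequence is in hand, parts (a) and (c) follow from routine applications of Lemmas \ref{scl} and \ref{b0} and the telescoping identity $(r_1\cdots r_p)^2=\prod_n r_nr_{n+1}$.
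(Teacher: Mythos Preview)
Your argument is correct, and while part~(a) is essentially the paper's proof, parts~(b) and~(c) take genuinely different routes. For~(b), the paper proves by induction the closed form $r_{p-j}=(\delta_{p-j}r_1+\theta_{p-j})/(\delta_{p-j-1}r_1+\theta_{p-j-1})$ and telescopes to $r_1\cdots r_p=\delta_pr_1+\theta_p$, then substitutes the explicit root $r^+$ of~(\ref{qchr}); you instead recognise $r_1\cdots r_p$ as an eigenvalue of the monodromy matrix $M=\left(\begin{smallmatrix}\delta_{p+1}&\theta_{p+1}\\ \delta_p&\theta_p\end{smallmatrix}\right)$ by exhibiting the eigenvector $(r_1,1)$ via $t_n\equiv0$, and then pin it to $\lambda_+$ by a sign check on the $\lambda_-$-eigenvector. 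This Floquet viewpoint is cleaner conceptually and yields (\ref{ala}) without the inductive identity; it also makes transparent that (\ref{qchr}) is just the eigenvector equation for $M$. Your handling of the ``hence'' is the same algebra as the paper's, with the added virtue that you make the tacit side condition $\delta_{p+1}+\theta_p<2$ explicit (the paper squares without comment). For~(c), the paper invokes~(\ref{ala}) and Lemma~\ref{b0}(b) and splits into odd/even~$p$; your argument $r_nr_{n+1}=a_nr_n+b_n>b_n$, hence $(r_1\cdots r_p)^2>\prod b_n$, is shorter, avoids the parity split and the formula~(\ref{ala}) entirely, and handles the case $\prod b_n=0$ uniformly. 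The paper's approach in~(c) has the minor advantage of tying the bound back to the explicit $\delta,\theta$ data, but yours is the more economical proof.
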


\begin{proof}
(a) Lemma \ref{b0} shows that $\delta_{i}>0$ for $i=2,\ldots,p+1.$ Now, either
(i) $b_{1}>0$ or (ii) $b_{1}=0.$ In case (i), the root $r^{+}$ of the
quadratic polynomial (\ref{qchr}) is positive since by Lemma \ref{b0}
$\theta_{p+1}>0$ and thus%
\[
r^{+}=\frac{\delta_{p+1}-\theta_{p}+\sqrt{(\delta_{p+1}-\theta_{p}%
)^{2}+4\delta_{p}\theta_{p+1}}}{2\delta_{p}}>\frac{\delta_{p+1}-\theta
_{p}+\left\vert \delta_{p+1}-\theta_{p}\right\vert }{2\delta_{p}}\geq0.
\]

If $r_{1}=r^{+}$ then from (\ref{epr}) $r_{i}=a_{i-1}+b_{i-1}/r_{i-1}\geq
a_{i-1}>0$ for $i=2,\ldots,p+1.$ Thus by Lemma \ref{scl}, (\ref{linp}) has a
unitary (in fact, positive) eigensequence of period $p.$ If $b_{1}=0$ then by
Lemma \ref{b0} $\theta_{p}=\theta_{p+1}=0$ and (\ref{qchr}) reduces to%
\[
\delta_{p}r^{2}-\delta_{p+1}r=0
\]
which has a root $r^{+}=\delta_{p+1}/\delta_{p}>0.$ As in the previous case it
follows that (\ref{linp}) has a positive eigensequence of period $p.$

(b) To estalish (\ref{ala}), let $r_{1}=r^{+}$ and note that (\ref{qchr}) can
be written as%
\begin{equation}
r_{1}=\frac{\delta_{p+1}r_{1}+\theta_{p+1}}{\delta_{p}r_{1}+\theta_{p}}
\label{r1}%
\end{equation}

Since $\{r_{n}\}$ has period $p$, $r_{p+1}=r_{1}$ so from (\ref{epr}) and the
definition of the numbers $\delta_{n}$ and $\theta_{n}$ it follows that%
\begin{align*}
a_{p}+\frac{b_{p}}{r_{p}}  &  =r_{p+1}=\frac{\delta_{p+1}r_{1}+\theta_{p+1}%
}{\delta_{p}r_{1}+\theta_{p}}=\frac{(a_{p}\delta_{p}+b_{p}\delta_{p-1}%
)r_{1}+a_{p}\theta_{p}+b_{p}\theta_{p-1}}{\delta_{p}r_{1}+\theta_{p}}\\
&  =\frac{a_{p}(\delta_{p}r_{1}+\theta_{p})+b_{p}(\delta_{p-1}r_{1}%
+\theta_{p-1})}{\delta_{p}r_{1}+\theta_{p}}\\
&  =a_{p}+\frac{b_{p}}{(\delta_{p}r_{1}+\theta_{p})/(\delta_{p-1}r_{1}%
+\theta_{p-1})}%
\end{align*}

Since $b_{p}\not =0$ it follows that%
\[
r_{p}=\frac{\delta_{p}r_{1}+\theta_{p}}{\delta_{p-1}r_{1}+\theta_{p-1}}%
\]

We claim that if $b_{i}\not =0$ for $i=1,\ldots,p$ then%
\begin{equation}
r_{p-j}=\frac{\delta_{p-j}r_{1}+\theta_{p-j}}{\delta_{p-j-1}r_{1}%
+\theta_{p-j-1}},\quad j=0,1,\ldots,p-2 \label{rj}%
\end{equation}

This claim is easily seen to be true by induction; we showed that it is true
for $j=0$ and if (\ref{rj}) holds for some $j$ then by (\ref{epr})
\begin{align*}
a_{p-j-1}+\frac{b_{p-j-1}}{r_{p-j-1}}  &  =r_{p-j}=\frac{(a_{p-j-1}%
\delta_{p-j-1}+b_{p-j-1}\delta_{p-j-2})r_{1}+(a_{p-j-1}\theta_{p-j-1}%
+b_{p-j-1}\theta_{p-j-2})}{\delta_{p-j-1}r_{1}+\theta_{p-j-1}}\\
&  =\frac{a_{p-j-1}(\delta_{p-j-1}r_{1}+\theta_{p-j-1})+b_{p-j-1}%
(\delta_{p-j-2}r_{1}+\theta_{p-j-2})}{\delta_{p-j-1}r_{1}+\theta_{p-j-1}}\\
&  =a_{p-j-1}+\frac{b_{p-j-1}(\delta_{p-j-2}r_{1}+\theta_{p-j-2})}%
{\delta_{p-j-1}r_{1}+\theta_{p-j-1}}%
\end{align*}
from which it follows that
\[
r_{p-j-1}=\frac{\delta_{p-j-1}r_{1}+\theta_{p-j-1}}{\delta_{p-j-2}r_{1}%
+\theta_{p-j-2}}%
\]
and the induction argument is complete. Now, using (\ref{rj}) we obtain%
\begin{equation}
r_{p}r_{p-1}\cdots r_{2}r_{1}=\frac{\delta_{p}r_{1}+\theta_{p}}{\delta
_{p-1}r_{1}+\theta_{p-1}}\frac{\delta_{p-1}r_{1}+\theta_{p-1}}{\delta
_{p-2}r_{1}+\theta_{p-2}}\cdots\frac{\delta_{2}r_{1}+\theta_{2}}{\delta
_{1}r_{1}+\theta_{1}}r_{1}=\delta_{p}r_{1}+\theta_{p} \label{rs}%
\end{equation}

Given that $r_{1}=r^{+}$ (\ref{rs}) implies that%
\begin{align*}
r_{1}r_{2}\cdots r_{p}  &  =\delta_{p}\frac{\delta_{p+1}-\theta_{p}%
+\sqrt{(\delta_{p+1}-\theta_{p})^{2}+4\delta_{p}\theta_{p+1}}}{2\delta_{p}%
}+\theta_{p}\\
&  =\frac{1}{2}\left(  \delta_{p+1}+\theta_{p}+\sqrt{(\delta_{p+1}-\theta
_{p})^{2}+4\delta_{p}\theta_{p+1}}\right)
\end{align*}
and (\ref{ala}) is obtained. Hence, $r_{1}r_{2}\cdots r_{p}<1$ if%
\[
\delta_{p+1}+\theta_{p}+\sqrt{(\delta_{p+1}-\theta_{p})^{2}+4\delta_{p}%
\theta_{p+1}}<2
\]

Upon rearranging terms and squaring:%
\[
(\delta_{p+1}-\theta_{p})^{2}+4\delta_{p}\theta_{p+1}<4-4(\delta_{p+1}%
+\theta_{p})+(\delta_{p+1}+\theta_{p})^{2}%
\]
which reduces to (\ref{alb}) after straightforward algebraic manipulations.

(c) First, assume that $p$ is odd. Then by (\ref{anc})%
\[
\delta_{p}\theta_{p+1}=(b_{2}b_{4}\cdots b_{p-1})(b_{1}b_{3}\cdots
b_{p})=b_{1}b_{2}\cdots b_{p}%
\]
so from (\ref{ala})%
\[
r_{1}r_{2}\cdots r_{p}>\sqrt{\delta_{p}\theta_{p+1}}=\sqrt{b_{1}b_{2}\cdots
b_{p}}%
\]

If $b_{i}<1$ for $i=1,\ldots,p$ then $b_{1}b_{2}\cdots b_{p}<1$ so
$\sqrt{b_{1}b_{2}\cdots b_{p}}>b_{1}b_{2}\cdots b_{p}$ as required. Now let
$p$ be even. Then from (\ref{ala}) and (\ref{anc})
\[
r_{1}r_{2}\cdots r_{p}>\frac{\delta_{p+1}+\theta_{p}}{2}\geq\frac{b_{2}%
b_{4}\cdots b_{p}+b_{1}b_{3}\cdots b_{p-1}}{2}%
\]

If $b_{i}<1$ for $i=1,\ldots,p$ then $b_{2}b_{4}\cdots b_{p}\geq b_{1}%
b_{2}\cdots b_{p}$ and$\ b_{1}b_{3}\cdots b_{p-1}\geq b_{1}b_{2}\cdots b_{p}$
and the proof is complete.
\end{proof}

\medskip

\begin{theorem}
\label{bext}Assume that the sequences $\beta_{n}e^{\alpha_{n}}\sigma_{1,n}$
and $\sigma_{2,n}$ are strictly positive and periodic and let $p$ be the least
common multiple of their periods. All non-negative orbits of (\ref{pc1}%
)-(\ref{pc2}) converge to (0,0) if $\beta_{i}e^{\alpha_{i}}\sigma_{1,i}<1$ for
$i=1,\ldots,p$ and (\ref{alb}) holds.
\end{theorem}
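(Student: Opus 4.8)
The plan is to dominate any non-negative orbit of the planar system by a solution of the scalar linear equation (\ref{linp}) and then to show that every such linear solution vanishes. With $a_n=\sigma_{2,n}$ and $b_n=\beta_ne^{\alpha_n}\sigma_{1,n}$ as in (\ref{bioab}), inequality (\ref{n2l}) reads $x_{n+1}\le b_nx_{n-1}+a_nx_n$; since all coefficients and all $x_n$ are non-negative, an easy induction gives $0\le x_n\le u_n$ for every $n$, where $\{u_n\}$ is the solution of (\ref{linp}) with $u_0=x_0$, $u_1=x_1$. Moreover, as in the proof of Theorem \ref{ric}(b), $y_n\le\mu x_{n-1}$ where $\mu>0$ bounds the periodic (hence bounded) sequence $\beta_ne^{\alpha_n}$. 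Consequently it suffices to prove $\lim_{n\to\infty}u_n=0$; then $x_n\to0$, and $y_n\to0$ follows.

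If $p=1$ then $a_n,b_n$ are constant and (\ref{alb}) collapses to $b_1<1-a_1$, so $a_n+b_n<1$ and Lemma \ref{gen} already gives $u_n\to0$; assume henceforth $p\ge2$. Since $a_i=\sigma_{2,i}>0$ for $i=1,\ldots,p$, Lemma \ref{lin}(a) supplies a positive period-$p$ eigensequence $\{r_n\}$, so by Lemma \ref{scl} equation (\ref{linp}) factors as (\ref{teq})--(\ref{xeq}) with $t_n$ and $u_n$ given explicitly by (\ref{tn}) and (\ref{un}). Because $b_i>0$ for all $i$, Lemma \ref{lin}(b) applies, and condition (\ref{alb}) is exactly what is needed there to conclude $\rho:=r_1r_2\cdots r_p<1$.

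Next I would establish the two decay estimates that formula (\ref{un}) requires. Since $r_k>0$ and $0<b_i<1$, Lemma \ref{lin}(c) gives $r_1\cdots r_p>b_1\cdots b_p$, i.e. $\tau:=(b_1\cdots b_p)/(r_1\cdots r_p)<1$; writing $n=mp+j$ with $0\le j<p$ in (\ref{tn}) and using the period-$p$ structure of $\{b_n\}$ and $\{r_n\}$ yields $|t_n|\le C_1\tau^{m}$ for a constant $C_1$ independent of $n$, so $t_n\to0$. The same periodicity argument applied to products of the $r_k$'s gives $r_nr_{n-1}\cdots r_1\le C_2\rho^{m}\to0$ and, more generally, $r_nr_{n-1}\cdots r_{i+1}\le C_2\rho^{\lfloor(n-i)/p\rfloor}$. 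Substituting these into (\ref{un}) bounds $|u_n|$ by $C_2\rho^{m}|u_0|$ plus the sum $\sum_{i=1}^{n-1}(r_n\cdots r_{i+1})|t_i|+|t_n|$, which is a discrete convolution of two geometrically decaying sequences and hence tends to $0$. Equivalently, iterating $|u_{k+1}|\le r_{k+1}|u_k|+|t_{k+1}|$ over blocks of $p$ consecutive steps gives $|u_{(m+1)p}|\le\rho|u_{mp}|+\eta_m$ with $\eta_m\to0$, after which a vanishing-forcing version of the argument in Lemma \ref{max} forces $|u_{mp}|\to0$, whence $|u_n|\to0$. This closes the reduction, so $x_n\to0$ and $y_n\to0$.

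The main obstacle is precisely this last step: the inhomogeneous term in (\ref{un}) is a sum whose number of summands grows with $n$, each summand being a long product of $r_k$'s times a $t_i$. What makes it go through is that periodicity forces both ingredients to contract geometrically over each block of $p$ steps — the $r_k$-products with ratio $\rho<1$ coming from (\ref{alb}) via Lemma \ref{lin}(b), and the $t_i$'s with ratio $\tau<1$ coming from $b_i<1$ via Lemma \ref{lin}(c) — so collapsing to the $p$-step recursion on $|u_{mp}|$ reduces the whole matter to the scalar estimate already isolated in Lemma \ref{max}.
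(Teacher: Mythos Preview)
Your proposal is correct and follows essentially the same route as the paper: dominate $x_n$ by the linear solution $u_n$ via induction on (\ref{n2l}), conclude $u_n\to0$ from the factorization in Lemma \ref{scl} together with the estimates of Lemma \ref{lin}, and then pull $y_n\to0$ along using boundedness of $\beta_ne^{\alpha_n}$. The paper's proof compresses the entire ``$u_n\to0$'' step into a one-line appeal to Lemma \ref{lin}, whereas you spell out explicitly how parts (b) and (c) of that lemma combine with (\ref{tn})--(\ref{un}) to force both the homogeneous product $r_n\cdots r_1$ and the inhomogeneous terms $t_i$ to decay geometrically over $p$-blocks; that elaboration is exactly the content the paper leaves implicit.
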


\begin{proof}
Let $\{u_{n}\}$ be a solution of the linear equation (\ref{linp}) with
$a_{n},b_{n}$ defined by (\ref{bioab}). If $u_{0}=x_{0}$ and $u_{1}=x_{1}$
then by (\ref{n2l})%
\begin{align*}
x_{2} &  \leq\beta_{0}e^{\alpha_{0}}\sigma_{1,1}x_{0}+\sigma_{2,1}x_{1}%
=\beta_{0}e^{\alpha_{0}}\sigma_{1,1}u_{0}+\sigma_{2,1}u_{1}=u_{2}\\
x_{3} &  \leq\beta_{1}e^{\alpha_{1}}\sigma_{1,2}x_{2}+\sigma_{2,2}x_{2}%
\leq\beta_{1}e^{\alpha_{1}}\sigma_{1,2}u_{1}+\sigma_{2,2}u_{2}=u_{3}%
\end{align*}

By induction it follows that $x_{n}\leq u_{n}.$ If (\ref{alb}) holds then by
Lemma \ref{lin}, $\lim_{n\rightarrow\infty}u_{n}=0$ so $\{x_{n}\}$ converges
to 0. Further, $\lim_{n\rightarrow\infty}y_{n}=0$ as in the proof of Theorem
\ref{ric} and the proof is complete.
\end{proof}

\medskip

Recall that the individual sequences $\alpha_{n},\beta_{n},\sigma_{1,n}$ need
not be periodic; see the note following (\ref{bioab}). Therefore,
\textit{Theorem \ref{bext} applies to the system (\ref{pc1})-(\ref{pc2}) even
if the system itself is not periodic} as long as the combination $\beta
_{n}e^{\alpha_{n}}\sigma_{1,n}$ of parameters is periodic along with
$\sigma_{2,n}.$

\subsection{Stocking strategies that do not prevent extinction\label{bio}}

Condition (\ref{alb}) involves the numbers $\delta_{j},\theta_{j}$ rather than
the coefficients of (\ref{linp}) directly. To illustrate the biological
significance of this condition with regard to extinction, consider the case of
period $p=2$ in which the role of $a_{i}$, $b_{i}$ is more apparent.
Inequality (\ref{alb}) in this case is%
\begin{align*}
\delta_{2}\theta_{3} &  <(1-\delta_{3})(1-\theta_{2})\\
a_{1}a_{2}b_{1} &  <(1-b_{2}-a_{1}a_{2})(1-b_{1})
\end{align*}

Simple manipulations reduce the last inequality to%
\begin{equation}
a_{1}a_{2}<(1-b_{1})(1-b_{2}).\label{p2}%
\end{equation}

In this form, it is easy to see the signficance of (\ref{alb}) with regard to
extinction. For if $b_{1},b_{2}<1$ then (\ref{p2}) holds even if $a_{1}>1$ or
$a_{2}>1$ so global convergence to (0,0) my occur when (\ref{c0}) does not
hold. Further, it is possible that (\ref{p2}) holds, together with arbitrarily
large mean value
\begin{equation}
\frac{a_{1}+a_{2}}{2}>1\label{pav}%
\end{equation}
if, say $a_{1}\rightarrow0$ as $a_{2}\rightarrow\infty$. In population models
this implies that if (\ref{p2}) holds with
\[
a_{i}=\sigma_{2,i},\quad b_{i}=\beta_{i}e^{\alpha_{i}}\sigma_{1,i}\qquad i=1,2
\]
then extinction may still occur after restocking the adult population so that
the mean value of the composite parameter $\sigma_{2,n}$ exceeds unity by a
wide margin.

\section{Complex multistable behavior\label{cr}}

In this section we consider the reduced system
\begin{align}
x_{n+1}  &  =\sigma_{1,n}y_{n}\label{rm1}\\
y_{n+1}  &  =\beta_{n}x_{n}e^{\alpha_{n}-c_{1,n}x_{n}-c_{2,n}y_{n}}
\label{rm2}%
\end{align}
where we assume that
\begin{equation}
\sigma_{1,n},c_{1,n},c_{2,n},\beta_{n}>0,\text{\quad}\alpha_{n}\geq0.
\label{rmp}%
\end{equation}

In the context of stage-structured models\ the assumption $\sigma_{2,n}=0$
applies in particular, to the case of a semelparous species, i.e. an organism
that reproduces only once before death. Additional interpretations in terms of
harvesting, migrations or other factors may be possible if $\sigma_{2,n}$
includes additional factors beyond the natural adult survival rate.

The system (\ref{rm1})-(\ref{rm2}) with $c_{2,n}=0$ has been studied in the
literature; for instance, an autonomous version is discussed in \cite{LP} and
\cite{Z}. The assumption $c_{2,n}>0,$ which adds greater inter-species
competition into the stage-structured model, leads to theoretical issues that
are not well-understood. We proceed by folding he system (\ref{rm1}%
)-(\ref{rm2}) to a second-order difference equation. The process here is
simple and self-contained but for a broader introduction and other
applications of folding to the study of discrete planar systems we refer to
\cite{DsPl}.

From (\ref{rm1}) we obtain $y_{n}=x_{n+1}/\sigma_{1,n}$. Now using (\ref{rm1})
and (\ref{rm2}) we obtain:%
\[
x_{n+2}=\sigma_{1,n+1}\beta_{n}x_{n}e^{\alpha_{n}-c_{1,n}x_{n}-c_{2,n}y_{n}%
}=\sigma_{1,n}\beta_{n}x_{n}e^{\alpha_{n}-c_{1,n}x_{n}-(c_{2,n}/\sigma
_{1,n})x_{n+1}}%
\]

This can be written more succinctly as%
\begin{equation}
x_{n+1}=x_{n-1}e^{a_{n}-c_{1,n}x_{n-1}-(c_{2,n}/\sigma_{1,n})x_{n}}
\label{o2s}%
\end{equation}

where%
\[
a_{n}=\alpha_{n}+\ln(\beta_{n}\sigma_{1,n+1}).
\]

\subsection{Fixed points, global stability}

It is useful to start by examining the fixed points of (\ref{o2s}) when all
parameters are constants, i.e. if (\ref{rm1})-(\ref{rm2}) is an autonomous
system. Then (\ref{o2s}) takes the form of the autonomous difference equation:%
\begin{equation}
x_{n+1}=x_{n-1}e^{a-c_{1}x_{n-1}-(c_{2}/\sigma_{1})x_{n}} \label{rmf}%
\end{equation}

This equation clearly has a fixed point at 0. The following is consequence of
Theorem \ref{ric}(b).

\begin{corollary}
\label{0gas}Assume that the system (\ref{rm1})-(\ref{rm2}) is autonomous, i.e.
$\alpha_{n}=\alpha$, $\beta_{n}=\beta$, $\sigma_{1,n}=\sigma_{1}$,
$c_{1,n}=c_{1}$ and $c_{2,n}=c_{2}$ are constants for all $n$.

(a) If $a=\alpha+\ln(\beta\sigma_{1})<0$ then 0 is the unique fixed point of
(\ref{rmf}) in $[0,\infty)$ and all positive solutions of (\ref{rmf}) converge
to zero$.$

(b) The eigenvalues of the linearization of (\ref{rmf}) at 0 are $\pm e^{a/2}%
$; thus, 0 is locally asymptotically stable if $a<0.$
\end{corollary}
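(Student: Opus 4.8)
\textbf{Proof proposal for Corollary \ref{0gas}.}

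The plan is to derive part (a) directly from Theorem \ref{ric}(b) by verifying its hypotheses in the autonomous reduced setting, and to obtain part (b) by a routine linearization. For part (a), the reduced system (\ref{rm1})-(\ref{rm2}) is the special case of (\ref{pc1})-(\ref{pc2}) with $\sigma_{2,n}=0$, and in the autonomous case all parameters are constant and hence trivially bounded, so the standing hypothesis (\ref{rpars}) and the boundedness requirements on $\alpha_n$ and $\beta_n$ in Theorem \ref{ric}(b) hold automatically. It then remains to check condition (\ref{c0}): since $\sigma_{2,n}=0$, the limsup in (\ref{c0}) is simply $\sigma_1\beta e^{\alpha}$, and this is $<1$ precisely when $\ln(\sigma_1\beta e^{\alpha})<0$, i.e. $\alpha+\ln(\beta\sigma_1)<0$, which is the stated hypothesis $a<0$. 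So Theorem \ref{ric}(b) applies and gives convergence of every non-negative orbit of the planar system to $(0,0)$; translating back through the folding $y_n=x_{n+1}/\sigma_1$, this yields that every positive solution of (\ref{rmf}) converges to zero.

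For the uniqueness of the fixed point when $a<0$, I would argue directly from (\ref{rmf}): a fixed point $\bar x\ge 0$ satisfies $\bar x=\bar x\,e^{a-c_1\bar x-(c_2/\sigma_1)\bar x}$, so either $\bar x=0$ or $e^{a-(c_1+c_2/\sigma_1)\bar x}=1$, i.e. $a=(c_1+c_2/\sigma_1)\bar x$. Since $c_1,c_2,\sigma_1>0$ by (\ref{rmp}), the right-hand side is nonnegative for $\bar x\ge 0$, so when $a<0$ there is no nonzero solution and $0$ is the unique fixed point in $[0,\infty)$.

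For part (b), I would linearize (\ref{rmf}) at the origin in the usual way: writing $F(u,v)=u\,e^{a-c_1 u-(c_2/\sigma_1)v}$ for the second-order equation $x_{n+1}=F(x_{n-1},x_n)$, one computes $F_u(0,0)=e^{a}$ and $F_v(0,0)=0$, so the linearization is $x_{n+1}=e^{a}x_{n-1}$, with characteristic equation $\lambda^2-e^{a}=0$ and eigenvalues $\lambda=\pm e^{a/2}$. Both have modulus $e^{a/2}<1$ exactly when $a<0$, giving local asymptotic stability of $0$. There is essentially no obstacle here; the only point requiring a moment's care is the bookkeeping in passing from the planar system's hypotheses to those of the folded equation, and confirming that the boundedness conditions in Theorem \ref{ric}(b) are vacuous in the autonomous case.
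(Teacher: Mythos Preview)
Your proposal is correct and follows exactly the approach the paper indicates: the paper simply states that the corollary is a consequence of Theorem \ref{ric}(b) without giving an explicit proof, and you have filled in precisely the verification of (\ref{c0}) in the autonomous case with $\sigma_{2,n}=0$, the elementary fixed-point uniqueness argument, and the routine linearization that the paper leaves implicit.
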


If $a>0$ then (\ref{rmf}) has exactly two fixed points: 0 and a positive fixed
point
\[
\bar{x}=\frac{a\sigma_{1}}{c_{1}\sigma_{1}+c_{2}}.
\]

Substituting $r_{n}=c_{1}x_{n}$ in (\ref{rmf}) yields%
\begin{equation}
r_{n+1}=r_{n-1}e^{a-r_{n-1}-br_{n}},\quad b=\frac{c_{2}}{\sigma_{1}c_{1}}
\label{fhl}%
\end{equation}

The positive fixed point of this equation is
\[
\bar{r}=\frac{a}{1+b}=c_{1}\bar{x}.
\]

The next result is proved in \cite{FHL}.

\begin{theorem}
\label{gas}Let $a\in(0,1]$.

(a) If $b\in(0,1)$ (i.e. $c_{2}<\sigma_{1}c_{1}$) then the positive fixed
point $\bar{r}$ of (\ref{fhl}) is a global attractor of all of its positive solutions.

(b) If $b=1$ (i.e. $c_{2}=\sigma_{1}c_{1}$) then every non-constant, positive
solution of (\ref{fhl}) converges to a 2-cycle whose consecutive points
satisfy $r_{n}+r_{n+1}=a,$ i.e. the mean value of the limit cycle is the fixed
point $\bar{r}=a/2.$
\end{theorem}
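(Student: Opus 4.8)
The plan is to first locate the extreme limit points $L:=\limsup_n r_n$ and $l:=\liminf_n r_n$ of a positive solution of (\ref{fhl}), and then treat the two cases. A uniform bound is immediate: $r_{n+1}=r_{n-1}e^{a-r_{n-1}-br_n}\le r_{n-1}e^{a-r_{n-1}}\le e^{a-1}\le1$ (since $\sup_{x>0}xe^{a-x}=e^{a-1}$ and $a\le1$), so $r_n<e^{a-1}\le1$ for $n\ge2$; also $L>0$, because $r_n\to0$ would give $r_{n+1}/r_{n-1}=e^{a-r_{n-1}-br_n}\to e^a>1$ and force each parity subsequence to diverge.

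For part (a) I would run a full-limiting-sequence argument. Along a subsequence realizing $L$, extract diagonally a two-sided orbit $\{\mu_i\}_{i\in\mathbb{Z}}$ of the equation with $\mu_0=L$ and $l\le\mu_i\le L$, and likewise $\{\nu_i\}$ with $\nu_0=l$. From $L=\mu_{-2}e^{a-\mu_{-2}-b\mu_{-1}}$, using $\mu_{-1}\ge l$, the monotonicity of $x\mapsto xe^{a-x-bl}$ on $[0,1]$ and $\mu_{-2}\le L\le1$, one obtains $L+bl\le a$; symmetrically, \emph{provided} $l>0$, one obtains $l+bL\ge a$. Subtracting and using $b<1$ forces $L=l$, hence $r_n\to\bar r=a/(1+b)$. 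The missing piece is persistence, $l>0$, and I expect this to be the real obstacle: if $l=0$, then $\{\nu_i\}$ has $\nu_0=0$, so $\nu_{2k}\equiv0$ and $\{\nu_{2k+1}\}$ is an orbit of the Ricker map $x\mapsto xe^{a-x}$ (whose positive orbits tend to $a$); hence $L=a$, and a short computation then forces the limiting orbit at $L$ to be the boundary $2$-cycle $(\ldots,a,0,a,0,\ldots)$. One rules this out by observing that this cycle is a saddle of the planar map $(u,v)\mapsto(v,ue^{a-u-bv})$ whose stable eigendirection is tangent to $\{v=0\}$, so its stable manifold lies in the coordinate axes and cannot be the $\omega$-limit of a positive orbit; equivalently, the boundary reduces to the repelling origin and a transversally repelling, acyclically arranged $2$-cycle, so standard uniform-persistence theory yields $l>0$.

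For part (b), where $b=1$, I would use the semiconjugate factorization. Taking logarithms in $r_{n+1}=r_{n-1}e^{a-r_{n-1}-r_n}$ and putting $F_n:=\ln(r_{n-1}/r_n)+a-r_{n-1}$, a one-line computation gives $F_{n+1}=a-F_n$; hence $F_n$ alternates between $c:=F_1$ and $a-c$, and $r_n=r_{n-1}e^{a-r_{n-1}-F_n}$, a first-order Ricker recursion with a $2$-periodic parameter. Next I would study $t_n:=r_{n-1}+r_n$: since $t_{n+1}-a=(t_n-a)+r_{n-1}(e^{a-t_n}-1)$, the bounds $r_{n-1}<1$ and $r_{n-1}<t_n\le1-a+t_n$ (here $a\le1$ is used), together with $0<1-e^{-x}<x$ and $e^x-1<x/(1-x)$ for $x\in(0,1)$, show that $t_{n+1}-a$ never changes sign and $|t_{n+1}-a|<|t_n-a|$ unless $t_n=a$. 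So $t_n$ is eventually monotone with a limit $t^*$, and $t^*=a$, since otherwise $r_{n-1}(e^{a-t_n}-1)=t_{n+1}-t_n\to0$ forces $r_n\to0$, hence $t_n\to0$, contradicting $r_{n+1}/r_{n-1}=e^{a-t_n}\to e^a>1$. Finally, with $t_n\to a$ monotonically, $r_{n+1}/r_{n-1}=e^{a-t_n}$ makes each parity subsequence eventually monotone, hence convergent to some $p,q\ge0$ with $p+q=\lim t_n=a$; and $p,q>0$, since $p=0$ would make $F_{2k}=\ln(r_{2k-1}/r_{2k})+a-r_{2k-1}\to+\infty$ while $F$ is constant along even indices. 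Thus $\{r_n\}$ converges to the $2$-cycle $\{p,q\}$ with $p+q=a$ ($p=q=a/2$ being the degenerate case). The load-bearing steps here are the identity $F_{n+1}=a-F_n$ and the no-overshoot estimate for $t_n$.
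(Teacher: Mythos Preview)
The paper does not prove Theorem~\ref{gas}; it simply quotes the result from \cite{FHL}. So there is no in-house argument to compare your part~(a) against. Your full-limiting-sequence plan is a standard and sound route: the bound $r_n\le e^{a-1}\le1$ and the inequalities $L+bl\le a$, $l+bL\ge a$ (the latter needing $l>0$) are correct as written, and your identification of persistence as the load-bearing step is accurate. The closing appeal to uniform persistence is the right tool: the boundary $\{uv=0\}$ is forward-invariant under $F(u,v)=(v,ue^{a-u-bv})$, carries only the repelling origin and the $2$-cycle $\{(a,0),(0,a)\}$, and the product Jacobian of $F^{2}$ at $(a,0)$ has eigenvalues $1-a$ (tangent to the axis) and $e^{a(1-b)}>1$, so the cycle is transversally unstable and the boundary chain is acyclic. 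One small wrinkle in your intermediate claim ``hence $L=a$'': the full limiting sequence $\{\nu_i\}$ at $l=0$ could in principle be identically zero, in which case the Ricker-convergence step says nothing about $L$; but this is moot once you invoke uniform persistence directly rather than via the $L=a$ detour.

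For part~(b) your argument is correct and takes a genuinely different route from what the paper does later when it \emph{extends} (b) to $0<d\le2$. There the paper uses the semiconjugate factorization $t_n=r_n/(r_{n-1}e^{-r_{n-1}})$, $t_{n+1}t_n=e^{d}$, reducing the problem to the one-parameter family $r\mapsto re^{d-r-\gamma re^{-r}}$, and then establishes global attractivity of its fixed point by a case analysis of the derivative (Lemma~\ref{gc1}). Your invariant $F_n$ is precisely $a-\ln t_n$, so the first reduction is the same; but instead of analysing the factor map you work directly with the sum $S_n=r_{n-1}+r_n$, obtaining monotone convergence $S_n\to a$ from the elementary bounds $1-e^{-x}<x$ and $e^{x}-1<x/(1-x)$ together with $r_{n-1}<1$. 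This is shorter and more self-contained; on the other hand it uses $a\le1$ in an essential way (both to get $r_{n-1}<1$ and to keep $a-S_n<1$), which is why the paper's derivative-based argument is what reaches $d\le2$.
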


The two-cycle in Theorem \ref{gas}(b) is not unique--it is determined by the
initial values. We derive the precise mechanism that explains this, and much
more complex behavior below. In particular, we extend Part (b) of Theorem
\ref{gas} by showing that it holds for $a\in(0,2]$ and even some parameters
need not be constants.

\subsection{Order reduction}

The semiconjugate factorization method that we used earlier for linear
equations also applies to (\ref{o2s}) if the following condition holds:
\begin{equation}
c_{2,n}=\sigma_{1,n}c_{1,n}\quad n=0,1,2,\ldots\label{ms}%
\end{equation}

In the autonomous case this reduces to the condition in Theorem \ref{gas}(b),
i.e. $c_{2}=\sigma_{1}c_{1}.$ This condition that is restrictive but
admissible in a biological sense, leads to interesting nonhypberbolic dynamics
that we explore in the remainder of this paper. 

If (\ref{ms}) holds then we substitute $r_{n}=c_{1,n}x_{n}$ in (\ref{o2s}) to
obtain
\[
r_{n+1}=\frac{c_{1,n+1}}{c_{1,n-1}}r_{n-1}e^{a_{n}-r_{n-1}-r_{n}}%
\]
which can be written as
\begin{align}
r_{n+1} &  =r_{n-1}e^{d_{n}-r_{n-1}-r_{n}}\label{rms}\\
d_{n} &  =a_{n}+\ln[c_{1,n+1}/c_{1,n-1}].\nonumber
\end{align}

Note that if $c_{1,n}$ has period 2 or is constant then $c_{1,n+1}=c_{1,n-1}$
so $d_{n}=a_{n}.$ In any case, a solution $x_{n}=r_{n}/c_{1,n}$ of (\ref{o2s})
is derived in terms of a solution of (\ref{rms}) when (\ref{ms}) holds.

Equation (\ref{rms}) admits a semiconjugate factorization that splits it into
two equations of order one. Using the concept of form symmetry from
\cite{FSOR}, we define%
\[
t_{n}=\frac{r_{n}}{r_{n-1}e^{-r_{n-1}}}%
\]
for each $n\geq1$ and note that
\[
t_{n+1}t_{n}=\frac{r_{n+1}}{r_{n}e^{-r_{n}}}\frac{r_{n}}{r_{n-1}e^{-r_{n-1}}%
}=\frac{r_{n+1}}{r_{n-1}e^{-r_{n-1}-r_{n}}}=e^{d_{n}}%
\]
or equivalently,%
\begin{equation}
t_{n+1}=\frac{e^{d_{n}}}{t_{n}}. \label{sc1}%
\end{equation}

Now%
\begin{equation}
r_{n+1}=e^{d_{n}}r_{n-1}e^{-r_{n-1}}e^{-r_{n}}=e^{d_{n}}\frac{r_{n}}{t_{n}%
}e^{-r_{n}}=\frac{e^{d_{n}}}{t_{n}}r_{n}e^{-r_{n}}=t_{n+1}r_{n}e^{-r_{n}}
\label{sc2}%
\end{equation}

The pair of equations (\ref{sc1}) and (\ref{sc2}) constitute the semiconjugate
factorization of (\ref{rms}):%
\begin{align}
t_{n+1}  &  =\frac{e^{d_{n}}}{t_{n}},\quad t_{0}=\frac{r_{0}}{r_{-1}%
e^{-r_{-1}}}\label{star1}\\
r_{n+1}  &  =t_{n+1}r_{n}e^{-r_{n}} \label{star2}%
\end{align}

Every solution $\{r_{n}\}$ of (\ref{rms}) is generated by a solution of the
system (\ref{star1})-(\ref{star2}). Using the initial values $r_{-1},r_{0}$ we
obtain a solution $\{t_{n}\}$ of the first-order equation (\ref{star1}). This
solution is then used to obtain a solution of (\ref{star2}), and thus also of
(\ref{rms}).

\subsection{Complex behavior for the autonomous equation}

If $p=1$ then $d_{n}$ is constant, say $d_{n}=d$ for all $n$. In this case
(\ref{rms}) reduces to the autonomous equation:
\begin{equation}
r_{n+1}=r_{n-1}e^{d-r_{n-1}-r_{n}} \label{rmsa}%
\end{equation}
although (\ref{o2s}) may not be autonomous, e.g. if $c_{1,n}$ has period 2, as
noted above.\ 

If $d<0$ then Corollary \ref{0gas} implies that all solutions of (\ref{rmsa})
converge to 0. Let $d>0$ so that there is a positive fixed point
\[
\bar{r}=\frac{d}{2}>0.
\]

The eigenvalues of the linearization of (\ref{rmsa}) at $\bar{r}$ are $-1$ and
$-d/2$, showing in particular that $\bar{r}$ is nonhyperbolic. The behavior of
solutions of (\ref{rmsa}) is sufficiently unusual that we use the numerical
simulation depicted in Figure \ref{one} to motivate the subsequent discussion.

\begin{figure}[ptbh]
\centering
\includegraphics[width=6.03in,height=2.76in,keepaspectratio]{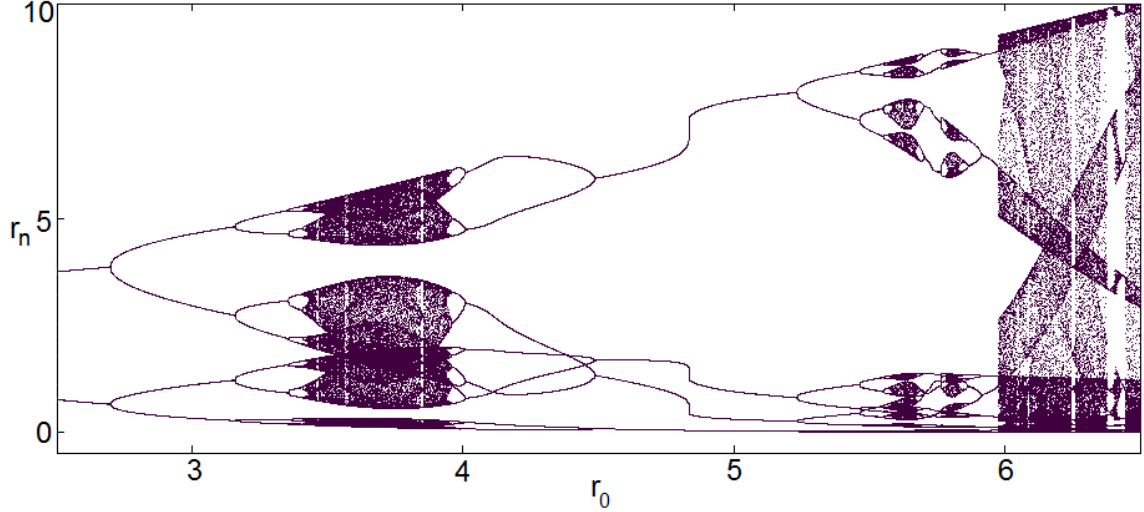}
\caption{Bifurcation of multiple stable solutions in the state-space}%
\label{one}%
\end{figure}

In Figure \ref{one}, $d=4.5$, $r_{-1}=d/2=2.25$ is fixed and $r_{0}%
\in(0,\infty)$ acts as a bifurcation parameter. The changing values of $r_{0}$
are shown on the horizontal axis in the range 2.5 to 6.5. For every grid value
of $r_{0}$ in the indicated range, 300 points of the corresponding solution
$\{r_{n}\}$ are plotted vertically. In this figure, coexisting solutions with
periods 2, 4, 8 and 16 are easily identified. The solutions shown in Figure
\ref{one} are stable since they are generated by numerical simulation, so that
qualitatively different, stable solutions exist for (\ref{rmsa}) \textit{for
different initial values}. In the remainder of this section we explain this
abundance of multistable solutions for (\ref{rmsa}) using the reduction
(\ref{star1})-(\ref{star2}).

All solutions of (\ref{star1}) with constant $d_{n}=d$ and $t_{0}\neq e^{d/2}$
are periodic with period 2:%
\[
\left\{  t_{0},\frac{e^{d}}{t_{0}}\right\}  =\left\{  \frac{r_{0}}%
{r_{-1}e^{-r_{-1}}},\frac{r_{-1}e^{d-r_{-1}}}{r_{0}}\right\}  .
\]

Hence the orbit of each nontrivial solution $\{r_{n}\}$ of (\ref{rmsa}) in its
state-space, namely, the $(r_{n},r_{n+1})$-plane, is restricted to the class
of curve-pairs%
\begin{equation}
g_{0}(r,t_{0})=t_{0}re^{-r}\quad\text{and\quad}g_{1}(r,t_{0})=t_{1}%
re^{-r},\quad t_{1}=\frac{e^{d}}{t_{0}} \label{ic}%
\end{equation}

These one-dimensional mappings form the building blocks of the
two-dimensional, standard state-space map $F$ of (\ref{rmsa}), i.e.
\[
F(u,r)=(r,ue^{d-u-r}).
\]

There are, of course, an infinite number of initial value-dependent
curve-pairs for the map $F.$

The next result indicates the specific mechanism for generating the solutions
of (\ref{rmsa}) from its semiconjugate factorization.

\begin{lemma}
\label{spt}Let $d>0$ and let $\{r_{n}\}$ be a solution of (\ref{rmsa}) with
initial values $r_{-1},r_{0}>0$.

(a) For $k=0,1,2,\ldots$ and $t_{0}$ as defined in (\ref{star1})
\[
r_{2k+1}=g_{1}\circ g_{0}(r_{2k-1},t_{0}),\quad r_{2k+2}=g_{0}\circ
g_{1}(r_{2k},t_{0})
\]

Thus, the odd terms of every solution of (\ref{rmsa}) are generated by the
class of one-dimensional maps $g_{1}\circ g_{0}$ and the even terms by
$g_{0}\circ g_{1}$;

(b) If the initial values \thinspace$r_{-1},r_{0}$ satisfy
\begin{equation}
r_{0}=r_{-1}e^{d/2-r_{-1}} \label{21}%
\end{equation}
then $g_{0}(r,t_{0})=g_{1}(r,t_{0})=re^{d/2-r}$; i.e. the two curves $g_{0}$
and $g_{1}$ coincide with the curve
\[
g(r)\doteq re^{d/2-r}%
\]

The trace of $g$ contains the fixed point $(\bar{r},\bar{r})$ in the
state-space and is invariant under $F.$
\end{lemma}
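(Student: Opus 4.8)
The plan is to establish parts (a) and (b) separately, using the semiconjugate factorization (\ref{star1})--(\ref{star2}) together with the observation that for constant $d$ the $t$-sequence is $2$-periodic.

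For part (a), first I would compute $t_0 = r_0/(r_{-1}e^{-r_{-1}})$ and $t_1 = e^d/t_0 = r_{-1}e^{d-r_{-1}}/r_0$ from (\ref{star1}), noting that since $d_n \equiv d$ the solution $\{t_n\}$ alternates between these two values: $t_{2k} = t_0$ and $t_{2k+1} = t_1$ for all $k \geq 0$. Then (\ref{star2}) reads $r_{n+1} = t_{n+1} r_n e^{-r_n}$, so that $r_{n+1} = g_0(r_n, t_0)$ when $n+1$ is even and $r_{n+1} = g_1(r_n, t_0)$ when $n+1$ is odd, using the definitions in (\ref{ic}). Chaining two consecutive steps gives the composition: to go from $r_{2k-1}$ (odd index) to $r_{2k+1}$ (odd index) I apply $g_0$ (landing at the even index $r_{2k}$) and then $g_1$, so $r_{2k+1} = g_1(g_0(r_{2k-1}, t_0), t_0) = g_1 \circ g_0(r_{2k-1}, t_0)$; symmetrically, from $r_{2k}$ to $r_{2k+2}$ I apply $g_1$ then $g_0$, giving $r_{2k+2} = g_0 \circ g_1(r_{2k}, t_0)$. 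This is a direct unwinding of the recursion and carries no real obstacle; I just need to be careful tracking which of $g_0, g_1$ corresponds to even versus odd target index. (One should also double-check the edge case $t_0 = e^{d/2}$, i.e. when the $t$-sequence is the constant fixed point, which corresponds exactly to condition (\ref{21}) and is the content of part (b).)

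For part (b), assume (\ref{21}) holds, i.e. $r_0 = r_{-1}e^{d/2 - r_{-1}}$. Then $t_0 = r_0/(r_{-1}e^{-r_{-1}}) = e^{d/2}$, and consequently $t_1 = e^d/t_0 = e^{d/2} = t_0$; so the $t$-sequence is constant equal to $e^{d/2}$. Substituting $t_n \equiv e^{d/2}$ into (\ref{ic}) gives $g_0(r, t_0) = e^{d/2} r e^{-r} = g_1(r, t_0) = r e^{d/2 - r} = g(r)$, as claimed. For the invariance assertion: the curve $g$ is the set $\{(r, re^{d/2-r}) : r > 0\}$, and I would show that $F$ maps this set into itself. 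Take a point $(u, r)$ on the trace of $g$, meaning $r = u e^{d/2 - u}$. Then $F(u,r) = (r, u e^{d - u - r})$, and I must check that the second coordinate equals $r e^{d/2 - r}$, i.e. that $u e^{d-u-r} = r e^{d/2-r}$. Using $r = ue^{d/2-u}$ on the left: $u e^{d-u-r} = (u e^{d/2 - u}) e^{d/2 - r} = r e^{d/2 - r}$, which is exactly what is needed. Finally, the fixed point $(\bar r, \bar r) = (d/2, d/2)$ lies on the trace because $\bar r e^{d/2 - \bar r} = (d/2) e^{d/2 - d/2} = d/2 = \bar r$.

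The main ``obstacle'' — really just bookkeeping — is the index parity in part (a): one must make sure the composition order $g_1 \circ g_0$ for odd terms and $g_0 \circ g_1$ for even terms matches the convention in (\ref{star2}) that $r_{n+1}$ gets the factor $t_{n+1}$, and that $t_1$ (the value used by $g_1$) is the one applied when stepping to an odd index. Once the alternation $t_{2k}=t_0$, $t_{2k+1}=t_1$ is fixed, everything else is a one-line substitution, and part (b) follows by specializing $t_0 = e^{d/2}$ plus the short algebraic verification of $F$-invariance shown above.
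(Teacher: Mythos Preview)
Your proposal is correct and follows essentially the same route as the paper. The only minor difference is in part~(b): the paper establishes invariance of the trace of $g$ by an inductive argument along the orbit (showing $(r_n,r_{n+1})$ on the curve implies $(r_{n+1},r_{n+2})$ is too, using $t_n\equiv e^{d/2}$), whereas you verify directly by algebra that $F$ maps an arbitrary point of the curve back to the curve; both arguments are equivalent and equally short.
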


\begin{proof}
(a) For $k=0,1,2,\ldots$ (\ref{star2}) implies that%
\begin{align*}
r_{2k+1}  &  =t_{2k+1}r_{2k}e^{-r_{2k}}=t_{1}r_{2k}e^{-r_{2k}}=g_{1}%
(r_{2k},t_{0})\\
r_{2k}  &  =t_{2k}r_{2k-1}e^{-r_{2k-1}}=t_{0}r_{2k-1}e^{-r_{2k-1}}%
=g_{0}(r_{2k-1},t_{0})
\end{align*}

Therefore,%
\[
r_{2k+1}=g_{1}(g_{0}(r_{2k-1},t_{0}),t_{0})=g_{1}\circ g_{0}(r_{2k-1},t_{0})
\]

A similar calculation shows that%
\[
r_{2k+2}=g_{0}(g_{1}(r_{2k},t_{0}),t_{0})=g_{0}\circ g_{1}(r_{2k},t_{0})
\]
and the proof of (a) is complete.

(b) Note that $g(\bar{r})=\bar{r}e^{d/2-\bar{r}}=\bar{r}$ so the trace of $g$
contains $(\bar{r},\bar{r}).$ The curves $g_{0},g_{1}$ coincide if
$t_{0}=e^{d}/t_{0}$, i.e. $t_{0}=e^{d/2}.$ This happens if the initial values
\thinspace$r_{-1},r_{0}$ satisfy (\ref{21}). In this case, $(r_{-1},r_{0})$ is
clearly on the trace of $g$ and by (\ref{star2})%
\[
r_{1}=t_{1}r_{0}e^{-r_{0}}=\frac{e^{d}}{t_{0}}r_{0}e^{-r_{0}}=t_{0}%
r_{0}e^{-r_{0}}=g(r_{0})
\]

Therefore, the point $(r_{0},r_{1})$ is also on the trace of $g.$ Since
$t_{n}=t_{0}$ for all $n$ if $t_{0}=e^{d/2}$ the same argument applies to
$(r_{n},r_{n+1})$ for all $n$ and completes the proof by induction.
\end{proof}

\medskip

Note that the invariant curve $g$ does not depend on initial values. There is
also the following useful fact about $g$.

\begin{lemma}
\label{p3}The mapping $g$ has a period-three point for $d\geq6.26.$
\end{lemma}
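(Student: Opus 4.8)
\noindent\emph{Proof proposal.} The plan is to verify the Li--Yorke period-three configuration for the unimodal map $g(r)=re^{d/2-r}$ and then invoke the Li--Yorke theorem (equivalently, the standard period-three forcing lemma behind Sharkovskii's ordering). First I would record the elementary shape of $g$ on $[0,\infty)$: it is continuous, vanishes at $0$, is strictly increasing on $[0,1]$ and strictly decreasing on $[1,\infty)$, and attains its global maximum $M\doteq g(1)=e^{d/2-1}$ at $r=1$. Since $d\geq 6.26>2$ we have $M>1$, so $g$ maps $J\doteq[0,M]$ into itself, and there is a unique $a\in(0,1)$ with $g(a)=1$. The witness point for the Li--Yorke configuration will be this $a$ (the preimage of the turning point on the increasing branch).

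Next I would check that $g^{3}(a)\leq a<g(a)<g^{2}(a)$. Two inequalities are immediate: $a<1=g(a)$ since $a\in(0,1)$, and $g(a)=1<M=g(1)=g^{2}(a)$ since $d>2$. Hence the whole statement reduces to $g^{3}(a)\leq a$. Because $g^{3}(a)=g^{2}(1)$, and both $a$ and $g^{2}(1)=e^{\,d-1-M}$ lie in $(0,1)$ when $d\geq 6.26$ (here $d-1-M<0$), and $g$ is strictly increasing on $[0,1]$, this is equivalent to $g\bigl(g^{2}(1)\bigr)\leq g(a)=1$, i.e.\ to the single scalar inequality $g^{3}(1)\leq 1$.

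Finally I would estimate $g^{3}(1)$. Writing $v\doteq g^{2}(1)=e^{\,d-1-M}>0$, one has $\ln g^{3}(1)=\ln v+d/2-v=(d-1-M)+d/2-v=\tfrac{3d}{2}-1-M-v$, so $g^{3}(1)\leq 1$ whenever $\psi(d)\doteq\tfrac{3d}{2}-1-e^{d/2-1}\leq 0$. The function $\psi$ is concave, with $\psi'(d)=\tfrac{3}{2}-\tfrac{1}{2}e^{d/2-1}$, hence strictly decreasing for $d>2+2\ln 3\approx 4.20$; and a direct numerical estimate (using $e^{2.13}>8.41$) gives $\psi(6.26)=8.39-e^{2.13}<0$. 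Therefore $\psi(d)\leq\psi(6.26)<0$ for every $d\geq 6.26$, so $g^{3}(1)<1$, the Li--Yorke configuration $g^{3}(a)\leq a<g(a)<g^{2}(a)$ holds on $J$, and $g$ has a point of period three. The only delicate step is the numerical estimate $\psi(6.26)<0$: it is precisely what pins down the threshold $6.26$ (one checks that $\psi(d_*)=0$ at $d_*\approx 6.26$), whereas the monotonicity of $\psi$ and all the reductions above are soft.
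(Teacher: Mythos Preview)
Your proof is correct, and its numerical core coincides with the paper's: both establish $g^{3}(1)<1$ by bounding $\ln g^{3}(1)<\tfrac{3d}{2}-1-e^{d/2-1}\doteq\psi(d)$ and then checking $\psi(6.26)<0$ together with the eventual monotonicity of $\psi$. The packaging, however, differs. You invoke the Li--Yorke configuration at the preimage $a\in(0,1)$ of the critical point, so that the single inequality $g^{3}(1)\leq 1$ immediately yields $g^{3}(a)\leq a<g(a)<g^{2}(a)$ and hence a period-three point. The paper instead pairs $g^{3}(1)<1$ with a second estimate $g^{3}(d/2-\varepsilon)>d/2-\varepsilon$ for small $\varepsilon>0$ and applies the intermediate value theorem to $g^{3}-\mathrm{id}$ on $(1,d/2-\varepsilon)$; the resulting fixed point of $g^{3}$ has genuine minimal period three because the only fixed points of $g$ are $0$ and $d/2$, both outside this interval. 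Your route is slightly more economical (one scalar estimate instead of two), while the paper's avoids an explicit appeal to the Li--Yorke theorem. A minor cosmetic point: the paper reserves the symbol $a$ for $d/2$, so your $a$ for the critical preimage would clash in context.
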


\begin{proof}
Let $a=d/2.$ The third iterate of $g$ is%
\[
g^{3}(r)=r\exp(3a-r-2re^{a-r}+e^{a-re^{a-r}})
\]

In particular,
\[
g^{3}(1)<\exp(3a-1-e^{a-1})\doteq h(a)
\]

Solving $h(a)=1$ numerically yields the estimate $a\approx3.12.$ Since $h(a)$
is decreasing if $a>2.1$ it follows that $h(a)<1$ if $a\geq3.13$. Therefore,
$g^{3}(1)<1$ for $d\geq6.26.$ Further, for $\varepsilon\in(0,a)$
\begin{align*}
g^{3}(a-\varepsilon)  &  >(a-\varepsilon)\exp\left[  2a+\varepsilon
-2(a-\varepsilon)e^{\varepsilon}+e^{a(1-e^{\varepsilon})}\right] \\
&  >(a-\varepsilon)\exp[e^{-a(e^{\varepsilon}-1)}-2a(e^{\varepsilon}-1)]
\end{align*}

For sufficiently small $\varepsilon$ the exponent is positive so we may assert
that%
\[
g^{3}(1)<1<a-\varepsilon<g^{3}(a-\varepsilon)
\]

Hence, there is a root of $g^{3}(r)$, or a period-three point of $g$ in the
interval $(1,a)$ if $a\geq3.13$, i.e. $d\geq6.26$.
\end{proof}

The function compositions in Lemma \ref{spt} are specifically the following
mappings:%
\begin{align*}
g_{1}\circ g_{0}(r,t_{0})  &  =re^{d-r-t_{0}re^{-r}},\\
g_{0}\circ g_{1}(r,t_{0})  &  =re^{d-r-t_{1}re^{-r}},\quad t_{1}=\frac{e^{d}%
}{t_{0}}.
\end{align*}

To simplify our notation, for each $t\in(0,\infty)$ define the class of
functions $f_{t}:(0,\infty)\rightarrow(0,\infty)$ as%
\[
f_{t}(r)=re^{d-r-tre^{-r}}.
\]

We also abbreviate $f_{t_{0}}$ as $f_{0}$, $f_{t_{1}}$ as $f_{1}$,
$g_{0}(\cdot,t_{0})$ as $g_{0}$ and $g_{1}(\cdot,t_{0})$ as $g_{1}$. Then we
see from the preceding discussion that
\begin{equation}
g_{1}\circ g_{0}=f_{0},\quad g_{0}\circ g_{1}=f_{1}. \label{ggf}%
\end{equation}

According to Lemma \ref{spt}, iterations of $f_{0}$ generate the odd-indexed
terms of a solution of (\ref{rmsa}) and iterations of $f_{1}$ generate the
even-indexed terms.

The next result furnishes a relationship between $f_{i}$ and $g_{i}$ for
$i=0,1.$

\begin{lemma}
\label{sc}Let $t_{0}\in(0,\infty)$ be fixed and $t_{1}=e^{d}/t_{0}.$ Then%
\begin{equation}
f_{1}\circ g_{0}=g_{0}\circ f_{0}\quad\text{and\quad}f_{0}\circ g_{1}%
=g_{1}\circ f_{1}. \label{sce}%
\end{equation}

\end{lemma}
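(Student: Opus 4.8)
The plan is to verify the two semiconjugacy identities in \eqref{sce} by direct substitution, using only the explicit formulas for $g_0, g_1, f_0, f_1$ already recorded in the excerpt. Recall $g_i(r) = t_i r e^{-r}$ and $f_i(r) = r e^{d - r - t_i r e^{-r}}$, where $t_1 = e^d/t_0$. The key observation that makes the computation manageable is that $g_0$ and $g_1$ differ only by a multiplicative constant ($g_1 = (t_1/t_0) g_0$), while $f_0$ and $f_1$ differ only in the coefficient $t_i$ appearing in the exponent; since the exponent's $t_i$-dependent term is exactly $t_i r e^{-r} = g_i(r)$, there is a clean bookkeeping structure to exploit.

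First I would compute $f_1 \circ g_0(r)$: setting $s = g_0(r) = t_0 r e^{-r}$, we get $f_1(s) = s\, e^{d - s - t_1 s e^{-s}}$. Next I would compute $g_0 \circ f_0(r)$: setting $w = f_0(r) = r e^{d - r - t_0 r e^{-r}} = r e^{d - r - s}$ (noting $s = t_0 r e^{-r}$), we get $g_0(w) = t_0 w e^{-w}$. Then I would check that these two expressions agree. Substituting $s = t_0 r e^{-r}$ into the first gives $t_0 r e^{-r} \cdot e^{d - t_0 r e^{-r} - t_1 t_0 r e^{-r} e^{-s}}$, and since $t_0 t_1 = e^d$, the term $t_1 t_0 r e^{-r} e^{-s} = e^d r e^{-r} e^{-s} = e^d \cdot (\text{something})$; meanwhile $w = r e^{d-r-s}$ so $e^{-w} = e^{-r e^{d-r-s}}$. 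The identity to confirm is that $t_0 r e^{-r} e^{d - s - e^d r e^{-r-s}}$ equals $t_0 r e^{d-r-s} e^{-r e^{d-r-s}}$, i.e. after cancelling $t_0 r$, that $e^{-r} e^{d - s} e^{-e^d r e^{-r-s}} = e^{d-r-s} e^{-r e^{d-r-s}}$; the prefactors $e^{-r+d-s} = e^{d-r-s}$ match, leaving $e^d r e^{-r-s} = r e^{d-r-s}$, which is immediate. The second identity $f_0 \circ g_1 = g_1 \circ f_1$ follows by the symmetric computation with the roles of the indices $0$ and $1$ interchanged (using $t_0 t_1 = e^d$ again), so I would simply note that it is verified the same way.

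The main obstacle here is purely organizational rather than conceptual: keeping track of which $t_i$ sits where in nested exponentials, and using the relation $t_0 t_1 = e^d$ at the right moment to collapse the cross term. There is no genuine difficulty — the identities are an instance of the general semiconjugate factorization framework of \cite{FSOR}, and in fact one could alternatively derive \eqref{sce} abstractly from the fact that $g_0, g_1$ intertwine the two ``halves'' of the order-reduced dynamics \eqref{star1}--\eqref{star2}; but the direct substitution is shortest and self-contained. I would present the verification of the first identity in full and dispatch the second by symmetry.
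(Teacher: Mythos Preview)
Your direct-substitution argument is correct, and you even allude to the shorter route at the end of your proposal without realizing how short it is. The paper bypasses all of the exponential bookkeeping: since \eqref{ggf} records $f_{0}=g_{1}\circ g_{0}$ and $f_{1}=g_{0}\circ g_{1}$, associativity of composition gives
\[
f_{1}\circ g_{0}=(g_{0}\circ g_{1})\circ g_{0}=g_{0}\circ(g_{1}\circ g_{0})=g_{0}\circ f_{0},
\]
and the second identity follows the same way. Your computation verifies the same fact concretely and confirms that the relation $t_{0}t_{1}=e^{d}$ is exactly what makes the cross terms collapse, which is instructive; the paper's argument, by contrast, never unpacks the formulas at all and shows that \eqref{sce} is a purely combinatorial consequence of the factorizations \eqref{ggf}, independent of the particular form of the Ricker map.
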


\begin{proof}
This may be established by straightforward calculation using the definitions
of the various functions, or alternatively, use (\ref{ggf}) to obtain
\[
f_{1}\circ g_{0}=\left(  g_{0}\circ g_{1}\right)  \circ g_{0}=g_{0}\circ
(g_{1}\circ g_{0})=g_{0}\circ f_{0}%
\]

This proves the first equality in (\ref{sce}) and the second equality is
proved similarly.
\end{proof}

\medskip

The equalities in (\ref{sce}) are not conjugacies since $g_{0}$ and $g_{1}$
are not one-to-one. However, the following is implied.

\begin{lemma}
\label{cyc}(a) If $\{s_{1},s_{2},\ldots,s_{q}\}$ is a $q$-cycle of $f_{0},$
i.e. a solution (listed in the order of iteration) of
\begin{equation}
s_{n+1}=f_{0}(s_{n})=s_{n}e^{d-s_{n}-t_{0}s_{n}e^{-s_{n}}} \label{1sc}%
\end{equation}
with minimal (or prime) period $q\geq1$ then $\{g_{0}(s_{1}),g_{0}%
(s_{2}),\ldots,g_{0}(s_{q})\}$ is a $q$-cycle of $f_{1},$ i.e. a solution of
\begin{equation}
u_{n+1}=f_{1}(u_{n})=u_{n}e^{d-u_{n}-t_{1}u_{n}e^{-u_{n}}} \label{2sc}%
\end{equation}
with period $q$ (listed in the order of iteration). Similarly, if
$\{u_{1},u_{2},\ldots,u_{q}\}$ is a $q$-cycle of $f_{1},$ i.e. a solution of
(\ref{2sc}) with minimal period $q\geq1$ then $\{g_{1}(u_{1}),g_{1}%
(u_{2}),\ldots,g_{1}(u_{q})\}$ is a $q$-cycle of $f_{0},$ i.e. solution of
(\ref{1sc}) with period $q$.

(b) If $\{s_{n}\}$ is a non-periodic solution of (\ref{1sc}) then $\{g_{0}%
(s${}$_{n})\}$ is a non-periodic solution of (\ref{2sc}). Similarly, if
$\{u_{n}\}$ is a non-periodic solution of (\ref{2sc}) then $\{g_{1}(u_{n})\}$
is a non-periodic solution of (\ref{1sc}).
\end{lemma}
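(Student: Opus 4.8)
\textbf{Proof proposal for Lemma \ref{cyc}.}

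The plan is to exploit the semiconjugacy relations in (\ref{sce}) to push cycles back and forth between $f_0$ and $f_1$, using $g_0$ as the intertwining map in one direction and $g_1$ in the other. First I would prove that if $\{s_1,\ldots,s_q\}$ is a $q$-cycle of $f_0$, then applying $g_0$ to each point yields a set that is mapped cyclically by $f_1$: indeed, by the first identity in (\ref{sce}), $f_1(g_0(s_i)) = g_0(f_0(s_i)) = g_0(s_{i+1})$ for each $i$ (indices mod $q$), so $\{g_0(s_1),\ldots,g_0(s_q)\}$ is a periodic orbit of $f_1$ whose period $q'$ divides $q$. The symmetric argument, using the second identity $f_0\circ g_1 = g_1\circ f_1$, shows that applying $g_1$ to a $q'$-cycle of $f_1$ produces a periodic orbit of $f_0$ of period dividing $q'$. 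The key observation for closing the loop is that $g_1\circ g_0 = f_0$ by (\ref{ggf}); hence $g_1(g_0(s_i)) = f_0(s_i) = s_{i+1}$, so composing the two transfers recovers the original $f_0$-cycle exactly (up to a shift by one step along the orbit). Therefore $q \mid q'$ and $q' \mid q$, forcing $q' = q$, and the minimal period is preserved. The ``similarly'' direction (starting from a $q$-cycle of $f_1$) is the mirror image, using $g_0\circ g_1 = f_1$ to recover the $f_1$-cycle.

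For part (b), the plan is essentially the same with the periodicity hypothesis negated. Suppose $\{s_n\}$ solves (\ref{1sc}) and is not periodic; by the first relation in (\ref{sce}), $\{g_0(s_n)\}$ solves (\ref{2sc}). If this image sequence were periodic with some period $q$, then applying $g_1$ and invoking $g_1\circ g_0 = f_0$ would give $s_{n+1} = g_1(g_0(s_n))$, a shift of a periodic sequence, hence periodic — contradicting the assumption on $\{s_n\}$. So $\{g_0(s_n)\}$ is non-periodic, and the other half follows by swapping the roles of the indices $0$ and $1$.

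The one point requiring a little care — and the main obstacle — is verifying that $\{g_0(s_n)\}$ is genuinely a solution of the recursion (\ref{2sc}) and not merely eventually so, i.e. that the relation $f_1(g_0(s_n)) = g_0(s_{n+1})$ holds for every index, including the initial ones. This follows directly from the pointwise functional identity in Lemma \ref{sc}, which holds on all of $(0,\infty)$, together with the fact (from (\ref{rmp}) and $d>0$) that every term of a solution of (\ref{rmsa}) with positive initial data stays in $(0,\infty)$, so $g_0$ and $f_1$ are both applicable at each step. Once that is in place, the divide-and-recover argument via (\ref{ggf}) closes everything with no estimates needed; the whole proof is a bookkeeping exercise in function composition.
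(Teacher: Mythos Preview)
Your argument is correct and follows essentially the same route as the paper: both proofs use the semiconjugacy $f_{1}\circ g_{0}=g_{0}\circ f_{0}$ from Lemma~\ref{sc} to push the cycle forward and the identity $g_{1}\circ g_{0}=f_{0}$ from (\ref{ggf}) to pull it back, with part (b) handled by contraposition in exactly the same way. The one difference is that you actually close the loop to show the \emph{minimal} period is preserved, whereas the paper's proof of (a) only verifies that $\{g_{0}(s_{k})\}$ has period $q$ in the order of iteration and does not argue minimality; your divide-and-recover step via $g_{1}$ is a small but genuine sharpening.
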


\begin{proof}
(a) By the hypothesis, $f_{0}(s_{n+q})=s_{n}$ for all $n$ and in the order of
iteration
\[
f_{0}(s_{k})=s_{k+1}\quad\text{for }k=1,\ldots,q-1\text{\quad and\quad}%
f_{0}(s_{q})=s_{1}.
\]

By Lemma \ref{sc},
\[
f_{1}(g_{0}(s_{n+q}))=g_{0}(f_{0}(s_{n+q}))=g_{0}(s_{n})
\]
and also
\begin{align*}
f_{1}(g_{0}(s_{k}))  &  =g_{0}(f_{0}(s_{k}))=g_{0}(s_{k+1})\quad\text{for
}k=1,\ldots,q-1,\\
f_{1}(g_{0}(s_{q}))  &  =g_{0}(f_{0}(s_{q}))=g_{0}(s_{1})
\end{align*}

It follows that $\{g_{0}(s_{1}),g_{0}(s_{2}),\ldots,g_{0}(s_{q})\}$ is a
periodic solution of (\ref{2sc}) with period $q$, listed in the order of
iteration. The rest of (a) is proved similarly.

(b) Let $\{s_{n}\}$ be a solution of (\ref{1sc}) such that $\{g_{0}(s${}%
$_{n})\}$ is a periodic solution of (\ref{2sc}). Then $\{g_{1}(g_{0}(s${}%
$_{n}))\}$ is a periodic solution of (\ref{1sc}) by (a). Since $g_{1}(g_{0}%
(s${}$_{n}))=f_{0}(s_{n})$ by (\ref{ggf}) we may conclude that there is a
positive integer $q$ such that $f_{0}^{q}(s_{n})=f_{0}(s_{n})=s_{n+1}$ for all
$n.$ Thus $s_{n+1}=f_{0}^{q-1}(s_{n+1})$ for all $n$ and it follows that
$\{s_{n}\}$ is a periodic solution of (\ref{1sc}). This proves the first
assertion in (b); the second assertion is proved similarly.
\end{proof}

\medskip

The next result concerns the local stability of the periodic solutions of
(\ref{1sc}) and (\ref{2sc}).

\begin{lemma}
\label{cya}If $\{s_{1},s_{2},\ldots,s_{q}\}$ is a periodic solution of
(\ref{1sc}) with minimal period $q$ such that $s_{k}\not =1$ for
$k=1,2,\ldots,q$ and
\begin{equation}
\prod\limits_{k=1}^{q}f_{0}^{\,\prime}(s_{k})<1 \label{las}%
\end{equation}
then $\{g_{0}(s_{1}),\ldots,g_{0}(s_{q})\}$ is a solution of (\ref{2sc}) of
period $q$ with $\prod\limits_{k=1}^{q}f_{1}^{\,\prime}(g_{0}(s_{k}))<1.$
Similarly, if $\{u_{1},u_{2},\ldots,u_{q}\}$ is a periodic solution of
(\ref{2sc}) with $u_{k}\not =1$ for $k=1,2,\ldots,q$ and
\[
\prod\limits_{k=1}^{q}f_{1}^{\,\prime}(u_{k})<1
\]
then $\{g_{1}(u_{1}),g_{1}(u_{2}),\ldots,g_{1}(u_{q})\}$ is a solution of
(\ref{1sc}) of period $q$ with $\prod\limits_{k=1}^{q}f_{0}^{\,\prime}%
(g_{1}(u_{k}))<1.$
\end{lemma}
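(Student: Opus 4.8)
The plan is to exploit the intertwining relation from Lemma \ref{sc} together with the chain rule, following the same strategy already used for Lemma \ref{cyc}. First I would record the key fact that $f_{1}\circ g_{0}=g_{0}\circ f_{0}$, so that for any $n$, applying the chain rule to both sides at the point $s_{n}$ gives
\[
f_{1}^{\,\prime}(g_{0}(s_{n}))\,g_{0}^{\,\prime}(s_{n})=g_{0}^{\,\prime}(f_{0}(s_{n}))\,f_{0}^{\,\prime}(s_{n})=g_{0}^{\,\prime}(s_{n+1})\,f_{0}^{\,\prime}(s_{n}),
\]
using that $f_{0}(s_{n})=s_{n+1}$ along the cycle. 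The idea is then to multiply these $q$ identities together over $k=1,\dots,q$ and telescope the $g_{0}^{\,\prime}$ factors: on the left I get $\big(\prod_{k}f_{1}^{\,\prime}(g_{0}(s_{k}))\big)\big(\prod_{k}g_{0}^{\,\prime}(s_{k})\big)$, and on the right $\big(\prod_{k}g_{0}^{\,\prime}(s_{k+1})\big)\big(\prod_{k}f_{0}^{\,\prime}(s_{k})\big)$, where the product of $g_{0}^{\,\prime}(s_{k+1})$ over a full period equals the product of $g_{0}^{\,\prime}(s_{k})$ (cyclic reindexing, since $s_{q+1}=s_{1}$). Cancelling the common factor $\prod_{k}g_{0}^{\,\prime}(s_{k})$ — which is legitimate provided it is nonzero — yields exactly
\[
\prod_{k=1}^{q}f_{1}^{\,\prime}(g_{0}(s_{k}))=\prod_{k=1}^{q}f_{0}^{\,\prime}(s_{k})<1,
\]
and the conclusion follows. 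The second assertion is symmetric, using $f_{0}\circ g_{1}=g_{1}\circ f_{1}$ in place of the first relation.

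The main obstacle is justifying the cancellation, i.e. checking that $g_{0}^{\,\prime}(s_{k})\neq0$ for each $k$; this is precisely where the hypothesis $s_{k}\neq 1$ enters. Since $g_{0}(r)=t_{0}re^{-r}$ we have $g_{0}^{\,\prime}(r)=t_{0}(1-r)e^{-r}$, which vanishes exactly at $r=1$ (recall $t_{0}>0$), so $s_{k}\neq1$ guarantees $g_{0}^{\,\prime}(s_{k})\neq0$ and the telescoping product is a nonzero real number that can be cancelled. I would also note in passing that $g_{0}^{\,\prime}(s_{k})$ is the derivative appearing in Lemma \ref{cyc}'s setup, and that Lemma \ref{cyc}(a) already supplies the fact that $\{g_{0}(s_{1}),\dots,g_{0}(s_{q})\}$ is genuinely a period-$q$ solution of (\ref{2sc}), so here I only need to track the derivative product. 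For the $f_{1}$-to-$f_{0}$ direction one uses instead $g_{1}^{\,\prime}(r)=t_{1}(1-r)e^{-r}$ together with the hypothesis $u_{k}\neq1$.

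One subtlety worth flagging: the product $\prod_{k}g_{0}^{\,\prime}(s_{k})$ may be negative (it is a product of terms $t_{0}(1-s_{k})e^{-s_{k}}$ whose signs depend on whether $s_{k}<1$ or $s_{k}>1$), but this does not matter — cancellation of a common nonzero factor from both sides of an equality of real numbers is valid regardless of sign, and the final inequality $\prod_{k}f_{1}^{\,\prime}(g_{0}(s_{k}))<1$ is inherited verbatim from $\prod_{k}f_{0}^{\,\prime}(s_{k})<1$ since the two products are literally equal. Thus no sign bookkeeping beyond "nonzero" is required, and the proof reduces to the chain-rule computation, the cyclic telescoping, and the one-line observation that $r=1$ is the unique critical point of $g_{0}$ (respectively $g_{1}$).
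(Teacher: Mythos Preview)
Your proposal is correct and follows essentially the same approach as the paper: differentiate the intertwining relation $f_{1}\circ g_{0}=g_{0}\circ f_{0}$ from Lemma~\ref{sc} via the chain rule, use $g_{0}'(r)=t_{0}(1-r)e^{-r}\neq 0$ under the hypothesis $s_{k}\neq 1$, and telescope the product over the cycle to obtain $\prod_{k}f_{1}'(g_{0}(s_{k}))=\prod_{k}f_{0}'(s_{k})<1$. Your added remark on signs is harmless extra care; the paper simply cancels without comment.
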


\begin{proof}
By Lemma \ref{sc} and the chain rule
\[
f_{1}^{\prime}(g_{0}(r))g_{0}^{\prime}(r)=g_{0}^{\prime}(f_{0}(r))f_{0}%
^{\prime}(r)
\]

Now $g_{0}^{\prime}(r)=(1-r)t_{0}e^{-r}\not =0$ if $r\not =1$. Thus if
$s_{k}\not =1$ for $k=1,2,\ldots,q$ then%
\begin{align*}
\prod\limits_{k=1}^{q}f_{1}^{\,\prime}(g_{0}(s_{k}))  &  =\frac{g_{0}^{\prime
}(f_{0}(s_{1}))f_{0}^{\prime}(s_{1})}{g_{0}^{\prime}(s_{1})}\frac
{g_{0}^{\prime}(f_{0}(s_{2}))f_{0}^{\prime}(s_{2})}{g_{0}^{\prime}(s_{2}%
)}\cdots\frac{g_{0}^{\prime}(f_{0}(s_{q}))f_{0}^{\prime}(s_{q})}{g_{0}%
^{\prime}(s_{q})}\\
&  =\frac{g_{0}^{\prime}(s_{2})f_{0}^{\prime}(s_{1})}{g_{0}^{\prime}(s_{1}%
)}\frac{g_{0}^{\prime}(s_{3})f_{0}^{\prime}(s_{2})}{g_{0}^{\prime}(s_{2}%
)}\cdots\frac{g_{0}^{\prime}(s_{1})f_{0}^{\prime}(s_{q})}{g_{0}^{\prime}%
(s_{q})}\\
&  =\prod\limits_{k=1}^{q}f_{0}^{\,\prime}(s_{k})<1
\end{align*}

The second assertion is proved similarly.
\end{proof}

\medskip

We are now ready to explain some of what appears in Figure \ref{one}.

\begin{theorem}
\label{mus}Let $d>0.$

(a) Except among solutions whose initial values satisfy (\ref{21}) there are
no positive solutions of (\ref{rmsa}) that are periodic with an odd period.

(b) If $d\geq6.26$ then (\ref{rmsa}) has periodic solutions of all possible
periods, including odd periods, as well as chaotic solutions in the sense of
Li and Yorke.

(c) Let $r_{-1},r_{0}>0$ be given initial values and define $t_{0}$ by
(\ref{star1}). Assume that $t_{0}\not =e^{d/2}$ and the sequence of iterates
$\{f_{0}^{n}(r_{-1})\}$ of the map $f_{0}$ converges to a minimal $q$-cycle
$\{s_{1},\ldots,s_{q}\}$. Then the corresponding solution $\{r_{n}\}$ of
(\ref{rmsa}) converges to the cycle $\{s_{1},g_{0}(s_{1}),\ldots,s_{q}%
,g_{0}(s_{q})\}$ of minimal period $2q$ in the sense that
\begin{equation}
\lim_{k\rightarrow\infty}|r_{2(k+j)-1}-s_{j}|=\lim_{k\rightarrow\infty
}|r_{2(k+j)}-g_{0}(s_{j})|=0\quad\text{for}\quad j=1,\ldots,q \label{lims}%
\end{equation}

(d) If $\{s_{1},\ldots,s_{q}\}$ in (c) satisfies (\ref{las}) and $s_{j}%
\not =1$ for $j=1,\ldots,q$ then for intial values $r_{-1}^{\prime}>0$ and
$r_{0}^{\prime}=g_{0}(r_{-1}^{\prime})$ where $|r_{-1}^{\prime}-r_{-1}|$ is
sufficiently small, the sequence $\{f_{0}^{n}(r_{-1}^{\prime})\}$ converges to
$\{s_{1},\ldots,s_{q}\}$ and (\ref{lims}) holds$.$

(e) Let $r_{-1},r_{0}>0$ be given initial values and define $t_{0}$ by
(\ref{star1}). If the sequence of iterates $\{f_{0}^{n}(r_{-1})\}$ of the map
$f_{0}$ is non-periodic then (\ref{rmsa}) has a non-periodic solution.
\end{theorem}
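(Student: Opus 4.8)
The plan is to route everything through the semiconjugate factorization (\ref{star1})--(\ref{star2}). With $d_n\equiv d$ the auxiliary sequence is $t_0,\ t_1=e^{d}/t_0,\ t_0,\dots$; it is constant exactly when $t_0=e^{d/2}$ (equivalently when (\ref{21}) holds) and otherwise has minimal period $2$. From Lemma \ref{spt}(a) together with (\ref{ggf}), the odd-indexed part of any positive solution $\{r_n\}$ of (\ref{rmsa}) is the $f_0$-orbit $r_{2k-1}=f_0^{\,k}(r_{-1})$, while $r_{2k}=g_0(r_{2k-1})$. Parts (a) and (e) are then purely structural. For (a): if (\ref{21}) fails and $\{r_n\}$ has period $P$, comparing $r_{n+1}=t_{n+1}r_ne^{-r_n}$ at indices $n$ and $n+P$ and cancelling the positive factor $r_ne^{-r_n}$ gives $t_{n+1}=t_{n+1+P}$, so $\{t_n\}$ is $P$-periodic; since its minimal period is $2$, $P$ must be even. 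For (e): a subsequence of a periodic sequence taken along an arithmetic progression is periodic, so if $\{r_n\}$ were periodic then $\{f_0^{\,k}(r_{-1})\}=\{r_{2k-1}\}$ would be periodic, contrary to hypothesis; hence this $\{r_n\}$ is a non-periodic solution of (\ref{rmsa}).

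For (c), the core, I would label the cycle so that $f_0^{\,k}(r_{-1})$ eventually shadows $s_1,s_2,\ldots,s_q$ in cyclic order; continuity of $g_0$ then makes $r_{2k}=g_0(r_{2k-1})$ shadow $g_0(s_1),\ldots,g_0(s_q)$ in order, and Lemma \ref{cyc}(a) identifies $\{g_0(s_1),\ldots,g_0(s_q)\}$ as a $q$-cycle of $f_1$; interleaving the two convergences gives (\ref{lims}) with limit cycle $\{s_1,g_0(s_1),\ldots,s_q,g_0(s_q)\}$. I would then check that this $2q$-tuple genuinely solves (\ref{rmsa}): on a triple $(s_j,g_0(s_j),s_{j+1})$ this is exactly $f_0(s_j)=s_je^{d-s_j-g_0(s_j)}$, and on a triple $(g_0(s_j),s_{j+1},g_0(s_{j+1}))$ it reduces, using $f_1(g_0(s_j))=g_0(f_0(s_j))=g_0(s_{j+1})$ from Lemma \ref{sc}, to the same identity. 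Since this cycle has auxiliary value $t_0\ne e^{d/2}$, part (a) shows its minimal period is even, and minimality of $\{s_1,\ldots,s_q\}$ under $f_0$ then forces that period to equal $2q$. Part (d) follows at once: if (\ref{las}) makes $\{s_1,\ldots,s_q\}$ (locally) asymptotically stable for $f_0$, its basin of attraction is open, so $\{f_0^{\,n}(r_{-1}')\}$ converges to it for $r_{-1}'$ near $r_{-1}$; taking $r_0'=g_0(r_{-1}')$ leaves the auxiliary value, hence the map $f_0$, unchanged, so (c) applies verbatim to the perturbed solution. The hypothesis $s_j\ne1$ enters via $g_0'(s_j)=(1-s_j)t_0e^{-s_j}\ne0$ (cf. Lemma \ref{cya}), needed if one also wants the companion $f_1$-cycle to be stable.

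For (b) I would pass to the invariant curve. By Lemma \ref{spt}(b) the trace of $g$ is invariant under the state-space map $F$, and a one-line computation gives $F(r,g(r))=(g(r),g(g(r)))$, so a solution of (\ref{rmsa}) whose initial values lie on $g$ (i.e. satisfy (\ref{21})) is precisely an orbit $r_n=g^{\,n+1}(r_{-1})$ of the one-dimensional map $g$. Lemma \ref{p3} gives $g$ a period-three point for $d\ge6.26$, so the Li--Yorke theorem (period three implies chaos) furnishes $g$ with periodic points of every minimal period and an uncountable scrambled set; each such point (with $r_0=g(r_{-1})$) produces a solution of (\ref{rmsa}) of the same minimal period, respectively a Li--Yorke chaotic solution, all with initial values on $g$.

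The main obstacle is the verification in (c): confirming that the interleaved limit set is a genuine periodic solution of (\ref{rmsa}) requires checking the recursion on the ``cross'' triples $(g_0(s_j),s_{j+1},g_0(s_{j+1}))$ and not merely the obvious ones, and then part (a) is exactly what excludes a spurious odd sub-period so that the minimal period comes out to $2q$. A secondary point is the strength of (\ref{las}) in (d): the perturbation argument needs it to deliver genuine asymptotic stability of the $q$-cycle (which it does when the derivative product lies in $(-1,1)$), so that the basin of attraction is open.
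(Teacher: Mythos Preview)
Your proposal is correct and follows essentially the same route as the paper: parts (a), (c), and (e) are reduced to Lemmas \ref{spt} and \ref{cyc}, part (b) goes through the invariant curve $g$ together with Lemma \ref{p3} and Li--Yorke, and part (d) uses (\ref{las}) for local stability of the $f_0$-cycle plus the observation that $r_0'=g_0(r_{-1}')$ preserves $t_0$. Your treatment of (c) is in fact considerably more explicit than the paper's (which simply cites the two lemmas), and your remark that (a) is what rules out a spurious odd sub-period of the interleaved cycle, as well as your caveat that (\ref{las}) should be read as the derivative product lying in $(-1,1)$ for the basin-of-attraction argument to go through, are useful clarifications the paper leaves implicit.
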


\begin{proof}
(a) This statement is an immediate consequence of Lemma \ref{spt} since the
number of points in a cycle must divide two, i.e. the number of curves
$g_{0},g_{1}$. An exception occurs when (\ref{21}) holds and the curves
$g_{0},g_{1}$ coincide.

(b) Suppose that the initial values $r_{-1},r_{0}$ satisfy (\ref{21}). Then
$g_{0}=g_{1}=g$ and the trace of $g$ contains the orbits of (\ref{rmsa}) since
the trace of $g$ is invariant by Lemma \ref{spt}. By Lemma \ref{p3} $g$ has a
period-three point if $d\geq6.24$ and in this case, (\ref{rmsa}) has solutions
with all possible periods in the state-space, including odd periods. In
addition, iterates of $g$ also exhibit chaos in the sense of \cite{LY}. For
(\ref{rmsa}) this is manifested in the state-space on the trace of $g$ if the
initial point $(r_{-1},r_{0})$ is on the trace of $g$. For arbitrary initial
values, odd periods do not occur by (a) and chaotic behavior generally occurs
on the pair of curves $g_{0},g_{1}$; see the Remark following this proof.

(c) This is an immediate consequence of Lemmas \ref{spt} and \ref{cyc}.

(d) If $|r_{-1}^{\prime}-r_{-1}|$ is sufficiently small then Lemma \ref{cya}
implies that the sequence $\{f_{0}^{n}(r_{-1}^{\prime})\}$ converges to
$\{s_{1},\ldots,s_{q}\}$. Now, if $r_{0}^{\prime}=g_{0}(r_{-1}^{\prime})$ then
$r_{0}^{\prime}/r_{-1}^{\prime}e^{r_{-1}^{\prime}}=t_{0}$ and thus,
(\ref{lims}) holds by Part (c).

(e) This is clear from Lemmas \ref{spt} and \ref{cyc}.
\end{proof}

\medskip

\begin{remark}
1. Theorem \ref{mus} explains how qualitatively different solutions in Figure
\ref{one} are generated by different initial values. Changes in the initial
value $r_{0}$ of (\ref{rmsa}) while $r_{-1}$ is fixed result, by (\ref{star1})
in changes in the parameter value $t_{0}$ in the mapping $f_{0}$. The
one-dimensional map $f_{0}$ generates different types of orbits with different
values of $t_{0}$ in the conventional way that is explained by the basic
theory. All of these orbits, combined with the iterates of the shadow map
$f_{1}$ appear in the state-space of (\ref{rmsa}) as points on the
aforementioned pair of curves.

2. Part (d) of Theorem \ref{mus} explains the sense in which the solutions of
(\ref{rmsa}) are stable and therefore appear as shown in Figure \ref{one}.
This is not local or linearized stability since if $r_{0}^{\prime}\not =%
g_{0}(r_{-1}^{\prime})$ then
\[
t_{0}^{\prime}=\frac{r_{0}^{\prime}}{r_{-1}^{\prime}e^{-r_{-1}^{\prime}}%
}\not =t_{0}%
\]
and with the different parameter value $t_{0}^{\prime}$, $\{f_{0}^{n}%
(r_{-1}^{\prime})\}$ may not converge to $\{s_{1},\ldots,s_{q}\}$ even if
$|r_{-1}^{\prime}-r_{-1}|$ is small enough to imply local convergence for the
iterates of $f_{0}$ defined with the original value $t_{0}$.

3. In Parts (a) and (b) of Theorem \ref{mus} if the initial point is not on
the trace of $g$ then the occurrence of all possible even periods and chaotic
behavior is observed for smaller values of $d.$ In fact, since $g$ involves
$d/2$ but $f_{0}$ involves $d$ it follows that $f_{0}$ actually has period 3
points for $d\geq3.13$ if the initial values yield a sufficiently small value
of $t_{0}.$ In Figure \ref{two} a stable three-cycle is identified for $d=3.6$
and initial values satisfying $r_{0}=r_{-1}e^{-r_{-1}}$ (so that $t_{0}=1$).
Odd periods do not occur for (\ref{rmsa}) in this case but all possible even
periods, as well as chaotic behavior (on curve-pairs) do occur.
\end{remark}

\begin{figure}[ptbh]
\centering
\includegraphics[width=2.45in,height=2.1in,keepaspectratio]{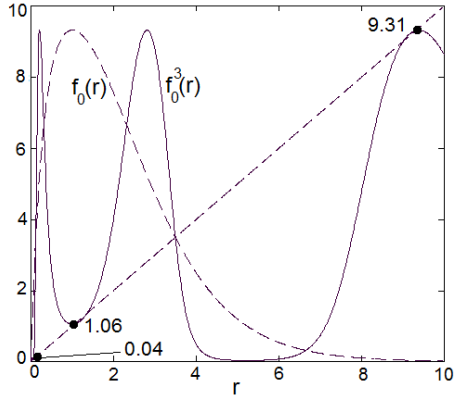}
\caption{Occurrence of period 3 for the associated interval map}%
\label{two}%
\end{figure}

\subsection{Further results: convergence to two-cycles}

The preceding results indicate that the solutions of (\ref{1sc}) and
(\ref{2sc}) determine the solutions of (\ref{rmsa}). From Theorem \ref{mus} it
is evident that complex behavior tends to occur when $d$ is sufficiently
large. Otherwise, the solutions of (\ref{rmsa}) tend to behave more simply as
noted in Theorem \ref{gas}. We now consider the occurrence of two-cycles for a
range of values of $d$ that are not too large but extend the range in Theorem
\ref{gas}(b), by examining the following first-order difference equation that
is derived from (\ref{1sc}) and (\ref{2sc})%
\begin{equation}
r_{n+1}=r_{n}e^{d-r_{n}-\gamma r_{n}e^{-r_{n}}},\quad\gamma>0 \label{dr}%
\end{equation}

\begin{lemma}
If $0<d\leq2$ then (\ref{dr}) has a unique positive fixed point $\bar{x}$.
\end{lemma}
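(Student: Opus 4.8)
The plan is to analyze the fixed-point equation of (\ref{dr}) directly. A positive fixed point $\bar{x}$ of (\ref{dr}) must satisfy $\bar{x} = \bar{x}\,e^{d-\bar{x}-\gamma\bar{x}e^{-\bar{x}}}$, which, after dividing by $\bar{x}>0$ and taking logarithms, is equivalent to
\[
\psi(x) \doteq x + \gamma x e^{-x} = d.
\]
So the claim reduces to showing that the function $\psi(x) = x + \gamma x e^{-x}$ takes the value $d$ exactly once on $(0,\infty)$ whenever $0<d\le 2$. First I would record the boundary behavior: $\psi(0)=0$ and $\psi(x)\to\infty$ as $x\to\infty$ (since the $x$ term dominates and $\gamma x e^{-x}\to 0$). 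By continuity this already gives existence of at least one positive solution for every $d>0$; the work is in uniqueness, and this is where the hypothesis $d\le 2$ enters.

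The key step is to examine $\psi'(x) = 1 + \gamma e^{-x}(1-x)$. For $x\le 1$ this is clearly positive, so $\psi$ is strictly increasing on $(0,1]$. For $x>1$ the term $\gamma e^{-x}(1-x)$ is negative, so monotonicity can fail; however, I would argue that $\psi$ stays above $2$ once it can start decreasing. Concretely, $\psi(1) = 1 + \gamma e^{-1} > 1$, and more usefully one checks that the minimum of $\psi$ on $[1,\infty)$ — if an interior critical point $x^*$ exists, it solves $\gamma e^{-x^*}(x^*-1)=1$, i.e. $\gamma x^* e^{-x^*} = x^*/(x^*-1)$ — satisfies $\psi(x^*) = x^* + x^*/(x^*-1) = x^* + 1 + 1/(x^*-1)$. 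Since $x^*>1$, this quantity is minimized (over the relevant range $x^*>1$) and one verifies $\psi(x^*) \ge 2$, with care taken near $x^*\to 1^+$ where $\psi(x^*)\to\infty$ and for larger $x^*$ where $\psi(x^*)\ge x^* > 1$ but needs the sharper bound $x^*+1+1/(x^*-1)\ge 3 > 2$ for $x^*\ge 1$ — actually the elementary inequality $x^*+1/(x^*-1)\ge 1+2 = 3$ by AM–GM on $(x^*-1)+1/(x^*-1)\ge 2$. Hence $\psi(x)\ge 3$ at any interior critical point, so $\psi$ has no local minimum at a value $\le 2$.

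Putting this together: on $(0,1]$ the function $\psi$ increases from $0$ past $1$; for $x>1$ the function $\psi$ never dips to or below $2$ (every critical value is $\ge 3$, and $\psi(1)>1$ together with $\psi\to\infty$ pins it up). Therefore the level set $\{\psi(x)=d\}$ for $d\le 2$ lies entirely in the region where $\psi$ is strictly monotincreasing, namely within $(0,1]$ extended slightly — more precisely, $\psi$ restricted to the preimage of $(-\infty,2]$ is strictly increasing — giving exactly one solution. The main obstacle I anticipate is making the ``$\psi\ge 3$ at interior critical points'' estimate fully rigorous and checking it patches correctly with the behavior on $[1,1+\epsilon]$ where $\psi'$ changes sign; one must confirm there is no solution of $\psi(x)=d$ with $x$ slightly bigger than $1$ that would duplicate a solution in $(0,1)$. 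This is handled by the critical-value bound: between the last point where $\psi$ is still $\le 2$ and increasing, and any subsequent return to the level $d\le 2$, $\psi$ would have to achieve a local max then a local min, but the local min value is $\ge 3>2$, a contradiction. Hence uniqueness holds for all $0<d\le 2$.
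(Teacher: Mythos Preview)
Your approach is correct and genuinely different from the paper's. The paper proves uniqueness by splitting into three cases according to whether $\gamma\le e$, $e<\gamma<e^2$, or $\gamma>e^2$, handling each with a separate monotonicity or sign argument on $\eta(x)=d-\psi(x)$. You instead give a single uniform argument: at any critical point $x^*>1$ of $\psi$ one has $\gamma x^*e^{-x^*}=x^*/(x^*-1)$, whence $\psi(x^*)=x^*+1+1/(x^*-1)=(x^*-1)+1/(x^*-1)+2\ge 4$ by AM--GM. (Your write-up says $\ge 3$; the correct bound is $\ge 4$, but either suffices.) In fact the same computation shows more: whenever $\psi'(x)\le 0$ one has $\gamma xe^{-x}\ge x/(x-1)$ and hence $\psi(x)\ge 4$, not only at critical points. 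This cleans up your closing paragraph considerably---if $\psi(a)=\psi(b)=d\le 2$ with $a<b$, then $\psi'(a)>0$ and $\psi'(b)>0$ (since $d<4$), so $\psi>d$ just past $a$ and $\psi<d$ just before $b$, forcing an intermediate crossing of level $d$ with $\psi'\le 0$, contradicting $d<4$.

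What your route buys: no case analysis on $\gamma$, and a strictly stronger conclusion---uniqueness actually holds for all $0<d<4$, not just $d\le 2$. What the paper's route buys: the case-by-case analysis records finer structural information about $\eta$ (and hence about $f$) that the paper reuses in the subsequent global-stability lemma, so its extra work is not wasted.
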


\begin{proof}
Existence: Let $\eta(x)=d-x-\gamma xe^{-x}$. The nonzero fixed points of
(\ref{dr}) must satisfy $e^{\eta(x)}=1$, i.e. $\eta(x)=0$. Since $\eta(0)=d>0$
and $\eta(d)=-\gamma de^{-d}<0$ there is a real number $\bar{x}\in(0,a)$ such
that $\eta(\bar{x})=0$. This proves existence.

Uniqueness: Note that $\eta^{\prime}(x)=-1-\gamma e^{-x}+\gamma xe^{-x}$.

\textit{Case 1}: $\gamma\leq e$; The function $xe^{-x}$ is maximized on
$(0,\infty)$ at $h(1)=e^{-1}$ so%
\[
\eta^{\prime}(x)=-1-\gamma e^{-x}+\gamma xe^{-x}\leq-1+1-\gamma e^{-x}=-\gamma
e^{-x}<0
\]

It follows that $\eta(x)$ is decreasing on $(0,\infty)$ for this case and has
a unique zero that occurs at $\bar{x}$.

\textit{Case 2}: $e<\gamma<e^{2}$; Consider the function $p(x)=x+\gamma
xe^{-x}$. Now%
\[
p^{\prime}(x)=1+\gamma e^{-x}-\gamma xe^{-x}=e^{-x}(e^{x}+\gamma-\gamma x)
\]

The function $q(x)=e^{x}+\gamma-\gamma x$ attains a minimum value at
$x=\ln(\gamma)$ since $q^{\prime}(x)=e^{x}-\gamma$. Furthermore,
\[
q(\ln(\gamma))=2\gamma-\gamma\ln(\gamma)=\gamma(2-\ln(\gamma))>0
\]
for $\gamma<e^{2}$. This implies that $p^{\prime}(x)>0$ on $(0,\infty)$ and
therefore $p(x)$ is increasing on $(0,\infty)$. Since $\eta(x)=d-p(x)$, this
implies that $\eta(x)$ is decreasing on $(0,\infty)$ and therefore it has a
unique zero that occurs at $\bar{x}$.

\textit{Case 3}: $\gamma>e^{2}$; In this case, we know that $\eta(x)$ is
decreasing on $[0,1]$ and $\eta(x)<0$ for $x\in\lbrack d,\infty)$. Thus it
remains to establish that $\eta(x)<0$ on $(1,d)$.%
\[
\eta(x)=d-x-\gamma xe^{-x}<d-1-e^{2-x}<d-2\leq0
\]

Thus $\eta(x)$ has a unique zero that occurs at $\bar{x}$ and this completes
the proof for all the above cases.
\end{proof}

\medskip

The above observations also indicate that $\eta(x)>0$ for $x\in(0, \bar{x})$
and $\eta(x)<0$ for $x\in(\bar{x}, \infty)$, which we will use in our further
analysis. Before examining the stability profile of $\bar{x}$, we need to
explore the properties of the function $f(x)$.

Since $f(x)=xe^{d-x-\gamma xe^{-x}}=xe^{\eta(x)}$, then $f^{\prime}%
(x)=e^{\eta(x)}+x\eta^{\prime}(x)e^{\eta(x)}$. By direct calculations,
$f^{\prime}(x)$ can be written as%
\[
f^{\prime}(x)=e^{\eta(x)}(1-x)(1-\gamma xe^{-x})
\]

It follows that $f$ has critical points when $x=1$ and $1-\gamma xe^{-x}=0$.
Now we consider the function $\phi(x)=1-\gamma xe^{-x}$, which has a critical
point at $x=1$, since $\phi^{\prime}(x)=\gamma e^{-x}(1-x)$. Hence it is
decreasing on $(0,1)$ and increasing on $(1, \infty)$ and $\phi(1)=1-\frac
{\gamma}{e}$ is the minimum of the function.

\medskip(i) When $\gamma<e$, then $\phi(1)>0$, so $\phi(x)>0$ on $(0, \infty
)$, hence $f(x)$ has only one critical point at $x=1$. When $\gamma=e,
\phi(1)=0$, and again, the only critical point of $f(x)$ occurs at $x=1$. We
further break down the case of $\gamma\leq e$ into the following subcases:

\medskip a. When $d<1+\frac{\gamma}{e}$, $\eta(1)=d-1-\frac{\gamma}{e}<0$,
thus $\bar{x}<1$. Moreover, $f(1)=d-1-\frac{\gamma}{e}<1$, which lets us
conclude that $f(x)<1$ for all $x\in(0, \infty)$.

\medskip b. When $d\geq1+\frac{\gamma}{e}$, $\eta(1)=d-1-\frac{\gamma}{e}%
\geq0$. This implies that $\bar{x}>1$ if $d> 1+\frac{\gamma}{e}$ and $\bar
{x}=1$ if $d= 1+\frac{\gamma}{e}$.

\medskip(ii) When $\gamma>e, \phi(1)<0$, so $f(x)$ has three critical points
at $x^{\prime}<1, x^{\prime}=1, x^{\prime\prime}>1$.

On $(0,x^{\prime})$, $1-x>0$ and $\phi(x)>0$, so $f$ is increasing. On
$(x^{\prime},1)$, $1-x>0$ and $\phi(x)<0$, so $f$ is decreasing. On
$(1,x^{\prime\prime})$, $1-x<0$ and $\phi(x)<0$, so $f$ is increasing. On
$(x^{\prime\prime},\infty)$, $1-x<0$ and $\phi(x)>0$, so $f$ is decreasing. By
the above observations, it follows that $x^{\prime},x^{\prime\prime}$ are
local maxima and $1$ is a minimum point. Next observe that%
\[
f(1)=e^{2-1-\frac{\gamma}{e}}<1
\]

Given that $\gamma x^{\prime}e^{-x^{\prime}}=\gamma x^{\prime\prime
}e^{-x^{\prime\prime}}=1$,
\[
f(x^{\prime})=x^{\prime}e^{d-x^{\prime}-\gamma x^{\prime}e^{-x^{\prime}}%
}=x^{\prime}e^{d-x^{\prime}-1}<x^{\prime}e^{2-x^{\prime}-1}=x^{\prime
}e^{1-x^{\prime}}
\]

Similarly, $f(x^{\prime\prime})<x^{\prime\prime}e^{1-x^{\prime\prime}}$. Now,
the function $s(x)=xe^{1-x}$ attains its maximum at $x=1$, since $s^{\prime
}(x)=(1-x)e^{1-x}$. Since $s(1)=1$, this implies that $s(x)<1$ for all
$x\neq1,x>0$. This implies that $f(x^{\prime}),f(x^{\prime\prime})<1$ as well,
thus for this case $f(x)<1$ for all $x\in(0,\infty).$

\medskip

Now we establish the global stability of $\bar{x}$.

\begin{lemma}
\label{gc1} If $0<d\leq2$ then every solution to (\ref{dr}) from positive
initial values converges to $\bar{x}$.
\end{lemma}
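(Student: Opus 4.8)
The plan is to reduce everything to one monotonicity property of $f(x)=xe^{\eta(x)}$ and then run an enveloping argument at $\bar x$. First I would dispose of boundedness: since $\eta(x)\le d-x$ we get $f(x)\le xe^{d-x}\le e^{d-1}$, so after one step every positive orbit lies in $(0,e^{d-1}]$; and because $\eta(0)=d>0$ there is $\varepsilon>0$ with $\eta\ge d/2$ on $(0,\varepsilon]$, so $f(x)\ge e^{d/2}x>x$ there, which by a routine argument forces $\liminf_n r_n>0$. Thus each positive orbit is eventually confined to a compact subinterval of $(0,\infty)$. I would also record from the preceding discussion that $\eta>0$ on $(0,\bar x)$ and $\eta<0$ on $(\bar x,\infty)$, hence $f(x)>x$ on $(0,\bar x)$ and $f(x)<x$ on $(\bar x,\infty)$, with $\bar x$ the only positive fixed point.

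The key step is to show that $x\mapsto f(x)+x$ is strictly increasing on $(0,\infty)$ when $0<d\le 2$ and $\gamma>0$, i.e. $f'(x)>-1$ everywhere. Using $f'(x)=e^{\eta(x)}(1-x)(1-\gamma xe^{-x})$: wherever $(1-x)(1-\gamma xe^{-x})\ge0$ we have $f'(x)\ge0$, and wherever it is negative I claim $|f'(x)|<e^{d-2}\le1$. If $x>1$ then the sign condition forces $\gamma xe^{-x}<1$, so $|f'(x)|=e^{d-x-\gamma xe^{-x}}(x-1)(1-\gamma xe^{-x})<e^{d-x}(x-1)\le e^{d-2}$, the last step because $e^{d-x}(x-1)$ is maximized at $x=2$. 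If $x<1$ then the sign condition forces $s:=\gamma xe^{-x}>1$ (and $\gamma>e$), and since $e^{-s}(s-1)\le e^{-2}$ for $s\ge1$ while $e^{d-x-2}(1-x)$ decreases on $(0,1)$, $|f'(x)|=e^{d-x}e^{-s}(s-1)(1-x)\le e^{d-x-2}(1-x)<e^{d-2}$. Hence $f(x)+x$ is strictly increasing with value $2\bar x$ at $\bar x$, so $f(x)+x<2\bar x$ for $x<\bar x$ and $f(x)+x>2\bar x$ for $x>\bar x$; combined with the sign of $f(x)-x$ this gives the envelope $x<f(x)<2\bar x-x$ on $(0,\bar x)$ and $\max\{0,2\bar x-x\}<f(x)<x$ on $(\bar x,\infty)$. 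In particular $f$ has no $2$-cycle, since a $2$-cycle $\{p,q\}$ with $p\neq q$ would force $f(p)+p=p+q=f(q)+q$.

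Finally I would deduce convergence from the envelope. For an orbit $\{r_n\}$, eventually in a compact subinterval of $(0,\infty)$, the envelope gives $|r_{n+1}-\bar x|<|r_n-\bar x|$ whenever $r_n\neq\bar x$: if $r_n,r_{n+1}$ lie on the same side of $\bar x$ this is immediate from $f(x)>x$ below and $f(x)<x$ above, and if they straddle $\bar x$ it follows because $f(r_n)$ lies strictly between $r_n$ and $2\bar x-r_n$. So $|r_n-\bar x|$ decreases to some $\delta\ge0$; if $\delta>0$, passing to a convergent subsequence of $\{r_n\}$ and using continuity of $f$ yields either a positive fixed point other than $\bar x$ or a $2$-cycle of $f$, both already excluded, so $\delta=0$ and $r_n\to\bar x$. (This is the standard enveloping argument; alternatively, "$f$ has no $2$-cycle" together with Sharkovskii's theorem shows every periodic point of $f$ is fixed, so every bounded orbit converges to $\bar x$.) The crux of the whole proof is the derivative estimate $|f'(x)|\le e^{d-2}$ — this is precisely where the hypothesis $d\le2$ is used, and the fact that $e^{d-x}(x-1)$ and $e^{-s}(s-1)$ both peak at the value corresponding to $x=2$ is what makes it tight.
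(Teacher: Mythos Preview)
Your proof is correct and follows essentially the same route as the paper: both arguments establish the envelope $|f(x)-\bar x|<|x-\bar x|$ by showing $t(x)=f(x)+x$ is strictly increasing, which reduces to the derivative bound $f'(x)>-1$. The only differences are organizational: the paper splits the derivative estimate into the cases $\gamma\le e$ and $\gamma>e$, whereas you split directly on which factor of $(1-x)(1-\gamma xe^{-x})$ is negative, arriving at the same bound $|f'(x)|<e^{d-2}$ somewhat more efficiently; and you make explicit the boundedness step and the final $|r_n-\bar x|\downarrow\delta$ compactness argument, both of which the paper leaves implicit after asserting that $|f(x)-\bar x|<|x-\bar x|$ suffices.
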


\begin{proof}
We establish convergence to $\bar{x}$ by showing that $|f(x)-\bar{x}%
|<|x-\bar{x}|$ for $x\neq\bar{x}$. This is equivalent to%
\begin{subequations}
\begin{align}
x  &  <f(x)<2\bar{x}-x\;\;\text{for}\;\;x<\bar{x}\label{gas1}\\
x  &  >f(x)>2\bar{x}-x\;\;\text{for}\;\;x>\bar{x} \label{gas2}%
\end{align}

The first inequalities in (\ref{gas1}-\ref{gas2}) are straightforward to
establish: since $\eta(x)>0$ for $x<\bar{x}$ and $\eta(x)<0$ for $x>\bar{x}$,
then $f(x)=xe^{\eta(x)}>x$ if $x<\bar{x}$ and $f(x)=xe^{\eta(x)}<x$ if
$x>\bar{x}$.

To establish the second inequalities in (\ref{gas1})-(\ref{gas2}), let
\end{subequations}
\[
t(x)=f(x)+x-2\bar{x}%
\]

Notice that $t(0)=-2\bar{x}<0$ and $t(\bar{x})=0$. We now show that the
inequalities $f(x)<2\bar{x}-x$ for $x<\bar{x}$ and $f(x)>2\bar{x}-x$ for
$x>\bar{x}$ are equivalent to $t(x)<0$ for $x<\bar{x}$ and $t(x)>0$ for
$x>\bar{x}$, respectively. We establish this by showing that $t(x)$ is
strictly increasing on $(0,\infty)$, i.e.%
\[
t^{\prime}(x)=f^{\prime}(x)+1>0\;\;\text{for}\;\;x>0
\]

We establish the above result by considering two cases: \medskip\textit{Case
1}: $\gamma\leq e$; recall that $f(x)$ is maximized at the unique critical
point $x=1$. Thus $f^{\prime}(x)>0$ for $x<1$ and $f^{\prime}(x)<0$ for $x>1$.
We also showed that $1-\gamma xe^{-x}>0$ for $x>0$. Thus for all $x>1$, since
$d\leq2$%
\begin{align}
|f^{\prime}(x)|  &  \leq e^{2-x-\gamma xe^{-x}}(x-1)(1-\gamma xe^{-x}%
)\nonumber\\
&  =(x-1)e^{1-x}e^{1-\gamma xe^{-x}}(1-\gamma xe^{-x})\nonumber\\
&  <e^{-1}e^{1-\gamma xe^{-x}}(1-\gamma xe^{-x})\nonumber\\
&  =e^{-\gamma xe^{-x}}(1-\gamma xe^{-x})<1\nonumber
\end{align}
i.e. $t^{\prime}(x)>0$ for $x>0$ and inequalities in (\ref{gas1})-(\ref{gas2}) follow.

\medskip\textit{Case 2}: $\gamma>e$; in this case, $f(x)$ has three critical
points occurring at $x^{\prime}<1$, $1$ and $x^{\prime\prime}>1$, where
$x^{\prime}$ and $x^{\prime\prime}$ are maxima and $1$ is a minimum. Thus%
\begin{align*}
f^{\prime}(x)  &  >0\;\;\text{and}\;\;1-\gamma xe^{-x}>0\;\;\text{for}%
\;\;x\in(0,x^{\prime})\\
f^{\prime}(x)  &  <0\;\;\text{and}\;\;1-\gamma xe^{-x}<0\;\;\text{for}%
\;\;x\in(x^{\prime},1)\\
f^{\prime}(x)  &  >0\;\;\text{and}\;\;1-\gamma xe^{-x}<0\;\;\text{for}%
\;\;x\in(1,x^{\prime\prime})\\
f^{\prime}(x)  &  <0\;\;\text{and}\;\;1-\gamma xe^{-x}>0\;\;\text{for}%
\;\;x\in(x^{\prime\prime},\infty)
\end{align*}

Thus $f^{\prime}(x)<0$ if either $x<1$ and $1-\gamma xe^{-x}<0$ or $x>1$ and
$1-\gamma xe^{-x}>0$. If $x<1$ and $1-\gamma xe^{-x}<0$, then%
\begin{align}
|f^{\prime}(x)|  &  \leq e^{2-x-\gamma xe^{-x}}(1-x)(\gamma xe^{-x}%
-1)\nonumber\\
&  =(\gamma xe^{-x}-1)e^{1-\gamma xe^{-x}}e^{1-x}(1-x)\nonumber\\
&  <e^{-1}e^{1-x}(1-x)\nonumber\\
&  =e^{-x}(1-x)<1\nonumber
\end{align}

If $x>1$ and $1-\gamma xe^{-x}>0$, then%
\begin{align}
|f^{\prime}(x)|  &  \leq e^{2-x-\gamma xe^{-x}}(x-1)(1-\gamma xe^{-x}%
)\nonumber\\
&  =(x-1)e^{1-x}(1-\gamma xe^{-x})e^{1-\gamma xe^{-x}}\nonumber\\
&  <e^{-1}e^{1-\gamma xe^{-x}}(1-\gamma xe^{-x})\nonumber\\
&  =e^{-\gamma xe^{-x}}(1-\gamma xe^{-x})<1\nonumber
\end{align}

In either case, if $f(x)$ is decreasing then $-1<f^{\prime}(x)<0$, thus
$t^{\prime}(x)=f^{\prime}(x)+1>0$, thus $t(x)$ is increasing for $x>0$, from
which the second inequalities in (\ref{gas1})-(\ref{gas2}) follow.
\end{proof}

\medskip

By Lemmas \ref{spt} and \ref{gc1}, the even and odd terms of (\ref{rmsa})
converge to $M=\bar{x}_{t_{0}}>0$ and $m=\bar{x}_{t_{1}}>0$, proving the
existence and stability of a two-cycle in the sense described in Theorem
\ref{mus}(c). Since $M$ and $m$ must satisfy%

\[
m=Me^{d-M-m}\;\;\text{and}\;\;M=me^{d-m-M}%
\]
and%
\[
Mm=mMe^{2d-2(M+m)}\;\;\text{i.e.}\;\;\;e^{2d-2(M+m)}=1
\]
we conclude that $M+m=d$. Thus the following extension of Theorem \ref{gas}(b)
is obtained.

\begin{theorem}
Let $0<d\leq2.$ Then every non-constant, positive solution of (\ref{rmsa})
converges, in the sense of Theorem \ref{mus}(c), to a two-cycle $\{\rho
_{1},\rho_{2}\}$ that satisfy $\rho_{1}+\rho_{2}=d,$ i.e. the mean value of
the limit cycle is the fixed point $\bar{r}=d/2.$
\end{theorem}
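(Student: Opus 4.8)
The plan is to assemble the statement from the pieces already developed in this section. Recall from Lemma~\ref{spt}(a) that for any solution $\{r_n\}$ of (\ref{rmsa}) with $r_{-1},r_0>0$, the odd-indexed subsequence $\{r_{2k-1}\}$ is generated by iterating $f_0=f_{t_0}$ and the even-indexed subsequence $\{r_{2k}\}$ by iterating $f_1=f_{t_1}$, where $t_0$ is defined by (\ref{star1}) and $t_1=e^d/t_0$. Both $f_{t_0}$ and $f_{t_1}$ are instances of the map $f(x)=xe^{d-x-\gamma xe^{-x}}$ appearing in (\ref{dr}), with $\gamma=t_0$ and $\gamma=t_1$ respectively. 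Hence, applying Lemma~\ref{gc1} (which requires only $0<d\le 2$ and $\gamma>0$, both satisfied here) to each of these two first-order equations, the sequence $\{f_0^n(r_{-1})\}$ converges to the unique positive fixed point $\bar{x}_{t_0}$ of $f_{t_0}$ and $\{f_1^n(r_0)\}$ converges to the unique positive fixed point $\bar{x}_{t_1}$ of $f_{t_1}$.

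Next I would set $M=\bar{x}_{t_0}$ and $m=\bar{x}_{t_1}$ and invoke Theorem~\ref{mus}(c): since $\{f_0^n(r_{-1})\}$ converges to the (trivial, length-one) cycle $\{M\}$, the solution $\{r_n\}$ converges in the sense of (\ref{lims}) to $\{M, g_0(M)\}$, i.e. the odd terms tend to $M$ and the even terms tend to $g_0(M)$. But by Lemma~\ref{cyc}(a) with $q=1$, $g_0$ maps the fixed point $M$ of $f_0$ to the fixed point $g_0(M)$ of $f_1$, and by uniqueness that fixed point is exactly $m=\bar{x}_{t_1}$. So the two-cycle is $\{M,m\}$ (it is a genuine two-cycle, not a fixed point, precisely when the solution is non-constant; I should note that $t_0\ne e^{d/2}$ is the condition separating these cases via Lemma~\ref{spt}(b), and when $t_0=e^{d/2}$ the solution lies on the trace of $g$ with $M=m=\bar r$, still consistent with $\rho_1+\rho_2=d$).

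It then remains to identify the mean value, which is the short algebraic step already sketched in the text preceding the theorem: since $M$ is a fixed point of $f_{t_0}$ we have $M=Me^{d-M-t_0 M e^{-M}}$ and one checks $t_0 M e^{-M}=m$ from the semiconjugacy relations $g_0(M)=m$, $g_1(m)=M$ (equivalently $m=Me^{d-M-m}$ and $M=me^{d-m-M}$); multiplying these two relations gives $Mm=Mm\,e^{2d-2(M+m)}$, hence $e^{2d-2(M+m)}=1$, so $M+m=d$. Setting $\rho_1=M$, $\rho_2=m$ gives $\rho_1+\rho_2=d$ and $\bar r=(\rho_1+\rho_2)/2=d/2$, completing the proof.

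The only genuinely delicate point is the bookkeeping that the two fixed points $\bar{x}_{t_0}$ and $\bar{x}_{t_1}$ produced by applying Lemma~\ref{gc1} separately to the odd and even subsequences really do interlock into a single two-cycle of the planar map — this is where one must be careful to cite Lemma~\ref{cyc}(a) (with minimal period $q=1$) and the uniqueness clause of the fixed-point lemma, rather than simply asserting it; everything else is either a direct appeal to Lemma~\ref{spt}, Lemma~\ref{gc1}, and Theorem~\ref{mus}(c), or the one-line mean-value computation. I do not expect any real obstacle, since all the analytic work (global convergence of (\ref{dr}) for $0<d\le 2$) was done in Lemma~\ref{gc1}; the theorem is essentially a corollary packaging that lemma through the semiconjugate factorization.
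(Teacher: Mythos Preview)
Your proposal is correct and follows essentially the same route as the paper: apply Lemma~\ref{spt} to split the solution into odd and even subsequences governed by $f_{t_0}$ and $f_{t_1}$, apply Lemma~\ref{gc1} to each to get convergence to the respective fixed points $M$ and $m$, and then do the short multiplicative computation to obtain $M+m=d$. Your version is in fact a bit more careful than the paper's own argument, which simply asserts the interlocking of the two limits via Lemmas~\ref{spt} and~\ref{gc1} without explicitly invoking Lemma~\ref{cyc}(a) or the uniqueness of the fixed point; your added bookkeeping and your remark on the borderline case $t_0=e^{d/2}$ are welcome clarifications.
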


As previously mentioned, (\ref{rmsa}) is valid when $c_{1,n}>0$ has period 2.
In this case, the solution of (\ref{o2s}) corresponding to $\{r_{n}\}$ of
(\ref{rmsa}) is $x_{n}=r_{n}/c_{1,n}$ which also converges to a sequence of
period 2. Thus we have the following corollary.

\begin{corollary}
Assume in the system (\ref{rm1})-(\ref{rm2}) that $\sigma_{1,n}=\sigma_{1}$,
$\alpha_{n}=\alpha$, $\beta_{n}=\beta$ are positive constants and
$c_{2,n}=\sigma_{1}c_{1,n}$ for all $n$ where $c_{1,n}$ has period two with
$c_{1,2k-1}=\xi_{1}$ and $c_{1,2k}=\xi_{2}$ where $\xi_{1},\xi_{2}>0$.

(a) If $\alpha+\ln(\sigma_{1}\beta)\in(0,2]$ then every orbit $\{(x_{n}%
,y_{n})\}$\ is determined as%
\[
x_{n}=\frac{r_{n}}{c_{1,n}},\quad y_{n}=\frac{r_{n+1}}{\sigma_{1}c_{1,n+1}}.
\]

(b) Every orbit in the positive quadrant converges to a two-cycle%
\[
\left\{  \left(  \frac{\rho_{1}}{\xi_{1}},\frac{\rho_{2}}{\sigma_{1}\xi_{2}%
}\right)  ,\left(  \frac{\rho_{2}}{\xi_{2}},\frac{\rho_{1}}{\sigma_{1}\xi_{1}%
}\right)  \right\}
\]
where $\rho_{i}=\lim_{k\rightarrow\infty}r_{2k-i}$ for $i=1,2$ and $\rho
_{1}+\rho_{2}=\alpha+\ln(\sigma_{1}\beta).$
\end{corollary}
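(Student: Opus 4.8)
The plan is to read this off as a direct corollary of the order reduction in Section 3.2 combined with the two-cycle theorem for $d\in(0,2]$ just proved, so the substance is already in hand and only bookkeeping remains. First I would check that the stated hypotheses place us in the \emph{autonomous} reduced equation (\ref{rmsa}): since $\alpha_n,\beta_n,\sigma_{1,n}$ are constants we have $a_n=\alpha_n+\ln(\beta_n\sigma_{1,n+1})=\alpha+\ln(\sigma_1\beta)$ independent of $n$; and since $c_{1,n}$ has period two, $c_{1,n+1}=c_{1,n-1}$, so the correction term $\ln[c_{1,n+1}/c_{1,n-1}]$ vanishes and $d_n=a_n=\alpha+\ln(\sigma_1\beta)\doteq d$. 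Because $c_{2,n}=\sigma_1c_{1,n}=\sigma_{1,n}c_{1,n}$, condition (\ref{ms}) holds, so the substitution $r_n=c_{1,n}x_n$ turns (\ref{o2s}) into (\ref{rms}), which here is exactly (\ref{rmsa}) with this $d$; and the hypothesis $\alpha+\ln(\sigma_1\beta)\in(0,2]$ is precisely $d\in(0,2]$.

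Part (a) is then immediate: inverting the substitution gives $x_n=r_n/c_{1,n}$ for every solution, and (\ref{rm1}) yields $y_n=x_{n+1}/\sigma_{1,n}=r_{n+1}/(\sigma_1c_{1,n+1})$. For part (b), apply the preceding theorem to (\ref{rmsa}): since $0<d\le2$, every non-constant positive solution $\{r_n\}$ converges, in the sense of Theorem \ref{mus}(c) --- i.e. the odd- and even-indexed subsequences converge separately --- to a two-cycle $\{\rho_1,\rho_2\}$ with $\rho_1=\lim_{k\to\infty}r_{2k-1}$, $\rho_2=\lim_{k\to\infty}r_{2k}$ and $\rho_1+\rho_2=d=\alpha+\ln(\sigma_1\beta)$. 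Now evaluate $x_n=r_n/c_{1,n}$ and $y_n=r_{n+1}/(\sigma_1c_{1,n+1})$ along the two parities: for odd $n=2k-1$ one has $c_{1,n}=\xi_1$ and $c_{1,n+1}=\xi_2$, so $(x_{2k-1},y_{2k-1})\to(\rho_1/\xi_1,\ \rho_2/(\sigma_1\xi_2))$; for even $n=2k$ one has $c_{1,n}=\xi_2$ and $c_{1,n+1}=\xi_1$, so $(x_{2k},y_{2k})\to(\rho_2/\xi_2,\ \rho_1/(\sigma_1\xi_1))$. These are exactly the two points of the claimed cycle, and $\rho_1+\rho_2=\alpha+\ln(\sigma_1\beta)$ as stated. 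The degenerate case $r_n\equiv d/2$ gives $\rho_1=\rho_2=d/2$, and the corresponding $(x_n,y_n)$ is still genuinely $2$-periodic because $c_{1,n}$ alternates, so it is covered by the same formula.

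There is no genuinely hard step here; the two things that need care are (i) confirming that period-two $c_{1,n}$ really kills the $\ln[c_{1,n+1}/c_{1,n-1}]$ term, so that we are in the autonomous equation (\ref{rmsa}) and may invoke the $d\in(0,2]$ theorem rather than having to re-run the argument for a non-autonomous reduced equation, and (ii) matching the two index parities to $\xi_1$ versus $\xi_2$ (and, in the $y$-formula, to $c_{1,n+1}$) without an off-by-one slip. One should also keep in mind that ``converges to a two-cycle'' here means convergence in the componentwise-along-parities sense of Theorem \ref{mus}(c), not ordinary orbit convergence.
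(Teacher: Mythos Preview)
Your proposal is correct and follows exactly the paper's approach: the paper does not give a separate proof of this corollary but simply remarks, just before stating it, that ``(\ref{rmsa}) is valid when $c_{1,n}>0$ has period 2'' and that then $x_n=r_n/c_{1,n}$ converges to a sequence of period 2, leaving the bookkeeping (verifying $d_n\equiv d=\alpha+\ln(\sigma_1\beta)$, inverting the substitution, and matching parities to $\xi_1,\xi_2$) to the reader. You have filled in precisely those details.
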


\subsection{A concluding remark on multistability}

We finally mention a feature of (\ref{rmsa}) that may make its multistable
nature less surprising. Consider the following class of nonautonomous
first-order equations%
\[
x_{n+1}=x_{n}e^{\gamma_{n}-\theta_{n}x_{n}}%
\]
where $\gamma_{n},\theta_{n}$ are given sequences of period 2 with $\theta
_{n}>0$ for all $n$. The change of variable $u_{n}=\theta_{n}x_{n}$ reduces
this equation to%
\begin{equation}
u_{n+1}=u_{n}e^{c_{n}-u_{n}},\quad c_{n}=\gamma_{n}+\ln\frac{\theta_{n+1}%
}{\theta_{n}} \label{eo2}%
\end{equation}

This equation can be written as%
\[
u_{n+1}=u_{n-1}e^{c_{n-1}-u_{n-1}}=u_{n-1}e^{c_{n-1}+c_{n}-u_{n-1}-u_{n}}%
\]

Since $c_{n}$ has period 2, the sum $c_{n-1}+c_{n}=d$ is a constant and
(\ref{rmsa}) is obtained.

If $r_{-1}=u_{0}$ and $r_{0}=u_{1}=u_{0}e^{c_{0}-u_{0}}$ then the
corresponding solution of (\ref{rmsa}) is the solution of (\ref{eo2}) with the
arbitrary initial value $u_{0}.$ Therefore, all solutions of (\ref{eo2})
appear among the solutions of (\ref{rmsa}) but not conversely. In fact, if
$c_{n}^{\prime}$ is any other sequence of period 2 such that $c_{n}^{\prime
}+c_{n-1}^{\prime}=d$ then while
\[
u_{n+1}=u_{n}e^{c_{n}^{\prime}-u_{n}}%
\]
is a different equation than (\ref{eo2}), it yields exactly the same
second-order equation (\ref{rmsa}). Hence, the following assertion is justified:

\begin{proposition}
The solutions of (\ref{rmsa}) include the solutions of all first-order
equations of type (\ref{eo2}) with $c_{n}+c_{n-1}=d.$
\end{proposition}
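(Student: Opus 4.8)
The plan is to verify by a direct substitution that every (forward) solution of the first-order equation (\ref{eo2}) is also a solution of the second-order equation (\ref{rmsa}), and then to match initial data; the inclusion of solution sets follows at once. The one point worth isolating is that the period-$2$ hypothesis is exactly what makes (\ref{rmsa}) \emph{autonomous}: for a period-$2$ sequence $\{c_n\}$ with $c_{2k}=c_0$ and $c_{2k+1}=c_1$, the sum $c_n+c_{n-1}$ equals $c_0+c_1=:d$ for \emph{every} $n$, independently of the parity of $n$, which is precisely the identity $c_n+c_{n-1}=d$ in the statement.

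First I would fix such a sequence $\{c_n\}$, set $d=c_0+c_1$, and take an arbitrary solution $\{u_n\}_{n\ge 0}$ of (\ref{eo2}) (positivity of $u_0$ is inherited by all later terms). For $n\ge 1$ I would write (\ref{eo2}) at the two consecutive indices $n-1$ and $n$, namely $u_n=u_{n-1}e^{c_{n-1}-u_{n-1}}$ and $u_{n+1}=u_ne^{c_n-u_n}$, and substitute the former into the first factor of the latter:
\[
u_{n+1}=u_ne^{c_n-u_n}=\bigl(u_{n-1}e^{c_{n-1}-u_{n-1}}\bigr)e^{c_n-u_n}=u_{n-1}e^{(c_{n-1}+c_n)-u_{n-1}-u_n}=u_{n-1}e^{d-u_{n-1}-u_n}.
\]
Thus $\{u_n\}$ satisfies (\ref{rmsa}); choosing $r_{-1}=u_0$ and $r_0=u_1$ identifies it, by uniqueness of forward solutions, with the solution of (\ref{rmsa}) from those initial values. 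Since the computation used nothing about $\{c_n\}$ beyond $c_{n-1}+c_n=d$, it applies verbatim to every other period-$2$ sequence $\{c_n'\}$ with $c_{n-1}'+c_n'=d$, so (\ref{rmsa}) simultaneously contains the solution set of each member of this whole family of pairwise distinct first-order equations.

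I do not expect any real obstacle: the argument is a two-line substitution, and the only care needed is the bookkeeping that (i) the period-$2$ assumption is what collapses a non-autonomous first-order recursion into an autonomous second-order one, and (ii) distinct pairs $(c_0,c_1)$ with the same sum $d$ give genuinely different equations (\ref{eo2}) but the same (\ref{rmsa}). For context I would also note why the converse fails — a fixed (\ref{eo2}) has only a one-parameter family of solutions, sitting inside the two-parameter solution family of (\ref{rmsa}) as the graph $r_0=r_{-1}e^{c_0-r_{-1}}$ in the $(r_{-1},r_0)$-plane, which a generic initial pair avoids — since this is what makes the multistability of (\ref{rmsa}) transparent.
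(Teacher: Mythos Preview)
Your proposal is correct and mirrors the paper's own argument almost verbatim: substitute (\ref{eo2}) at index $n-1$ into (\ref{eo2}) at index $n$, use $c_{n-1}+c_n=d$ to obtain (\ref{rmsa}), and identify initial data via $r_{-1}=u_0$, $r_0=u_1$. Your remarks on why the converse fails and on the one-parameter curve $r_0=r_{-1}e^{c_0-r_{-1}}$ also match the paper's commentary.
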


The coexistence of solutions of so many different first-order equations among
the solutions of (\ref{rmsa}) is a further indication of the diversity of
solutions that the latter may exhibit.

\section{Conclusion and future directions}

In this paper we examine the dynamics of the non-autonomous system
(\ref{pc1})-(\ref{pc2}) whose special cases appear in stage-structured models
of populations that are of Ricker type, or overcompensatory. In Section
\ref{exti} we obtain conditions that imply uniform boundedness as well as
global convergence to zero with variable parameters. In biological models
these results give general conditions for the species' extinction. We have
also shown that in periodic environments certain stocking strategies do not
prevent extinction.

In Section \ref{cr} we study the dynamics of a special case of the system that
is mathematically interesting. We use semiconjugate factorization to show that
in a wider range of parameters than what is considered in \cite{FHL} complex
and multistable behavior occurs. 

The results in Section \ref{cr} concern Equation (\ref{rmsa}) which is
autonomous (even if the system is not). For future investigation one may
consider the more general, non-autonomous equation (\ref{rms}) with periodic
$d_{n}.$ Preliminary work on this periodic case shows that the dynamics of
(\ref{rms}) where $d_{n}$ has an odd period (including the autonomous case
$p=1$) is substantially and qualitatively different from the case where
$d_{n}$ has an even period. 

Another generalization of (\ref{rmsa}), namely the autonomous equation%
\begin{equation}
r_{n+1}=r_{n-1}e^{d-br_{n-1}-cr_{n}}\label{rnms}%
\end{equation}
where $b,c>0$ exhibits different dynamics than (\ref{rmsa}) when $b\not =c.$
In particular, we expect that mulitstable orbits will not occur although
complex behavior is possible. There is currently no comprehensive study of the
dynamics of (\ref{rnms}) that we are aware of so obtaining significant details
on the dynamics of this equation would be desirable.

\medskip

\end{document}